\newcommand{\Frac}[2] {\frac {\textstyle #1} {\textstyle #2}}
\newcommand{\Int }    {\displaystyle \int}
\newcommand{\Sum }    {\displaystyle \sum}
\newcommand{\Lim} {\displaystyle \lim}
\def \ro {{\rho}}
\def\R {{ \mathbb{R} }}
\def\N {{ \mathbb{N} }}
\def \no {{\nonumber}}
\def \Theta {{\mathcal{U}}}
\def \epso {{\nu_2}}
\def \nuo {{\nu_1}}
\def \Ag {{A_{\gamma}}}
\def \fg {{f_{\gamma}}}
\def \Fga {{F_{\gamma}}}
\def \B {{ {\mathcal B} }}
\def\dist{{\rm dist}}
\def \F {{ { \mathcal F} }}
\def \M {{ {\Omega} }}
\def \Vo {{\mathbf{V}}}
\def \Yo {{\mathbf{Y} }}
\def \G {{ {\mathcal G} }}
\def \C {{ {\mathcal C} }}
\def \V {{ {\mathcal V} }}
\def \P {{ {\mathcal P} }}
\def \A {{ {\mathcal A} }}
\def \E {{ {\mathcal E} }}
\def \K {{ {\mathcal K} }}
\def \l {{ { _{L^2}} }}
\def \R {{ {\boldmath{R}} }}
\def\T {{{\boldmath{T}}}}
\def\N {\boldmath{N}}
\def \ut {{\tilde{{u}}_k(t)}}
\def \pt {{\tilde{{\phi}}_k(t)}}
\newcommand{\dt}{k}
\newcommand{\beq}{\begin{equation}}
\newcommand{\eeq}{\end{equation}}
\newcommand{\bsp}{\begin{split}}
\newcommand{\esp}{\end{split}}
\newcommand{\bg}{\begin{gathered}}
\newcommand{\eg}{\end{gathered}}
\newcommand{\bpf}{\begin{proof}}
\newcommand{\epf}{\end{proof}}
\newtheorem{Thm}{Theorem}
\newtheorem{Lem}{Lemma}
\newtheorem{Cor}{Corollary}[section]
\newtheorem{Rem}{Remark}[section]
\newtheorem{Prop}{Proposition}
\newtheorem{Def}{Definition}[section]
\newcommand{\gvi}{\|g\|_{\infty}}
\newcommand{\ph}{\phi^n}
\newcommand{\phn}{\phi^{n-1}}
\newcommand{\g}{_{\gamma}}
\newcommand{\x}{\xi}
\title{Long time stability of a classical efficient scheme for an incompressible two-phase flow model}
\author{T. Tachim Medjo \thanks{Department of
Mathematics and Statistics, Florida International University, DM413B,
University Park, Miami, Florida 33199, USA  ({\tt tachimt@fiu.edu}).}
        \and F. Tone\thanks{ Department Mathematics and Statistics, University of West
Florida, Pensacola, FL 32514, USA ({\tt ftone@uwf.edu}).}}
\begin{document}

\maketitle

\begin{abstract}
In this article we consider the implicit Euler scheme for  a
homogeneous two-phase flow model in a two-dimensional domain and
with the aid of the discrete Gronwall lemma and of the discrete
uniform Gronwall lemma we prove that the global attractors generated
by the numerical scheme converge to the global attractor of the
continuous system as the time-step approaches zero.
\end{abstract}

\begin{keywords}
semi-implicit scheme, long-time stability, incompressible two-phase flow, discrete attractors
\end{keywords}

\begin{AMS}
35Q30,35Q35,35Q72
\end{AMS}

\pagestyle{myheadings}
\thispagestyle{plain}
\markboth{T. Tachim Medjo  and F. Tone }{2D two-phase flow}

\section{Introduction}
It is well known that the 2D incompressible flow can be extremely
complicated with possible chaos and turbulent behavior, \cite{temam88}.
Although some of the features of this turbulent or chaotic behavior
may be deduced via analytic means, it is widely believed that
numerical methods are indispensable for obtaining a better
understanding of these complicated phenomena.

Let us recall that the incompressible Navier--Stokes equations
govern the motion of single-phase fluids, such as air or water. On
the other hand, we are faced with the difficult problem of
understanding the motion of binary fluid mixtures, that is fluids
composed by either two phases of the same chemical species or phases
of different composition. Diffuse interface models are well-known
tools to describe the dynamics of complex (e.g., binary) fluids,
\cite{gra1}. For instance, this approach is used in \cite{bles} to
describe cavitation phenomena in a flowing liquid. The model
consists of the Navier--Stokes equations coupled with the
phase-field system, \cite{cagi,gra1,gra2,gra3}. In the isothermal
compressible case, the existence of a global weak solution is proved
in \cite{fei1}. In the incompressible isothermal case, neglecting
chemical reactions and other forces, the model reduces to an
evolution system which governs the fluid velocity $u $ and the order
parameter $\phi. $ This system can be written as a Navier--Stokes
equation coupled with a convective Allen-Cahn equation, \cite{gra1}.
The associated initial and boundary value problem was studied in
\cite{gra1}, in which the authors proved that the system generated a
strongly continuous semigroup on a suitable phase space which
possesses a global attractor $\A. $ They also established the
existence of an exponential attractor $\E. $ This entails that $\A $
has a finite fractal dimension, which is estimated in \cite{gra1} in
terms of some model parameters. The dynamic of simple single-phase
fluids has been widely investigated, although some important issues
remain unresolved, \cite{temam88}. In the case of binary fluids, the
analysis is even more complicated and the mathematical study is
still at its infancy, as noted in \cite{gra1}.

In this article, we consider a homogeneous two-phase flow model in a
two-dimensional domain, we discretize in time using the implicit
Euler scheme and with the aid of the discrete Gronwall lemma and of
the discrete uniform Gronwall lemma we prove that the global
attractors generated by the numerical scheme converge to the global
attractor of the continuous system as the time-step approaches zero.
Our work has been inspired by previous results of one of the authors
and her collaborators. In \cite{TW},
for example, the authors considered %an Euler time discretization
the implicit Euler scheme for the 2D Navier--Stokes equations and
proved that the numerical scheme was $H^1$-uniformly stable in time.
In a later article (see \cite{CZT}), the authors used the theory for
multi-valued attractors to prove the convergence of the discrete
attractors to the global attractor of the continuous system as the time-step parameter approached zero.
In \cite{T4}, the author considered the implicit Euler scheme for the
two-dimensional magnetohydrodynamics equations and showed that the
scheme was $H^2$-stable. Similar results were obtained in \cite{TW2011} and \cite{ET}, % for the .... model.
where the authors proved not only the long-time stability of the
implicit Euler scheme for the two-dimensional Rayleigh-Benard
convection problem, and the thermohydraulics equations,
respectively, but also the convergence of the global attractors
generated by the numerical scheme to the global attractor of the
continuous system as the time-step approaches zero.

Let us mention that although we drew our inspiration from
\cite{CZT,TW,T4,TW2011}, the problem we treat here does not fall
into the framework of these references. Besides the usual nonlinear
term of the conventional Navier--Stokes system, the model considered
here contains another (stronger) nonlinear term that results from
the coupling of the convective Allen-Cahn equation and the
Navier--Stokes system. Because of this, the analysis of the
numerical scheme considered in this work tends to be more
complicated and subtle than that of the 2D Navier--Stokes system
studied in \cite{TW}.

The article is divided as follows. In the next section, we recall
from \cite{gra1} the incompressible homogeneous two-phase flow and
its mathematical setting. In Section 3 we study the stability of a
time discretization scheme for the model. More precisely, we prove
that the scheme is uniformly bounded in $\Yo $ and $\Vo$, provided
that the time-step is small enough. In Section \ref{s5} we recall
the theory of the so-called multi-valued attractors, and then we
apply it to our model.

\section{A two phase flow model and its mathematical setting}
\subsection{Governing equations}
In this article, we consider a model of homogeneous incompressible
two-phase flow with singularly oscillating forces. More precisely,
we assume that the domain $\M $ of the fluid is a bounded domain in
$\R^2. $ Then, we consider the system

\begin{equation}\label{ye6}
\left \{
\begin{array}{lll}
\Frac{\partial u}{\partial t} -  \nuo \Delta u
+ (u \cdot \nabla) u + \nabla p  = g -  \K \hbox{div} (\nabla \phi \otimes \nabla \phi), \\ \\
\hbox{ div } u  = 0, \\ \\
 \Frac{\partial \phi }{\partial t } + u \cdot \nabla \phi + \mu = 0,
 \\ \\
 \mu = - \epso \Delta \phi + \alpha f(\phi),
\end{array} \right. \end{equation}
in $\M \times (0, + \infty). $

 In (\ref{ye6}), the unknown functions are the velocity $u = (u_1,
u_2) $ of the fluid, its pressure $p $ and the order (phase)
parameter $ \phi. $  The quantity $\mu $ is the variational
derivative of the following free energy functional
\begin{equation}
\F(\phi) = \Int_{\M} \left ( \frac{\epso}{2} | \nabla \phi |^2 +
\alpha F(\phi) \right ) ds,
\end{equation}
where, e.g., $ F(r) = \Int^r_0 f(\zeta) d \zeta. $ Here, the
constants $ \nuo > 0 $ and $\K > 0 $ correspond to the kinematic
viscosity of the fluid and the capillarity (stress) coefficient
respectively, $ \epso, \ \alpha > 0 $ are two physical parameters
describing the interaction between the two phases. In particular, $
\epso $ is related with the thickness of the interface separating
the two fluids. Hereafter, as in \cite{gra1}, we assume that $ \epso
\leq \alpha. $

In (\ref{ye6}), $g$ is an external time-dependent volume force and we have assumed the density equal to one.

We endow (\ref{ye6}) with the boundary condition
\begin{equation}\label{b2}
u = 0, \ \frac{\partial \phi}{\partial \eta} = 0 \hbox{ on }
\partial \M \times (0, + \infty),
\end{equation}
where $\partial \M $ is the boundary of $\M $ and $\eta $ is its
outward normal.

The initial condition is given by
\begin{equation}\label{ib2}
(u, \phi) (0)  = (u_0,\phi_0) \hbox{ in } \M.
\end{equation}

\subsection{Mathematical setting}

We first recall from \cite{gra1} the weak formulation of
(\ref{ye6})--(\ref{ib2}). Hereafter, we assume that the domain $\M$ is
bounded with a smooth boundary $\partial \M $ (e.g., of class $\C^2
).$ We also assume that $f \in \C^1(\R) $ satisfies
\begin{equation}\label{s1}
 \left \{
\begin{array}{ll}
\Lim_{|r| \rightarrow + \infty} f^{'}(r) > 0, \\ \\
| f^{'}(r) | \leq c_f ( 1 + |r|^m ), \ \forall r \in \R,
\end{array} \right. \end{equation}
where $c_f $ is some positive constant and $m \in [1,+ \infty) $ is
fixed. It follows from (\ref{s1}) that
\begin{equation}\label{s2}
|f(r)| \leq c_f ( 1 + |r|^{m+1} ), \ \forall r \in \R.
\end{equation}

If $ X $ is a real Hilbert space with inner product $(\cdot,
\cdot)_X, $ we will denote the induced norm by $| \cdot |_X, $ while
$X^* $ will indicate its dual. We set $$ \V = \{ u \in \C^{\infty}_c
(\M): \ \hbox{ div } u = 0 \hbox{ in } \M \}. $$ We denote by $H $
and $V $ the closure of $\V$ in $(L^2(\M))^2 $ and $(H^1_0(\M))^2 $
respectively.  The scalar product in $H $ is denoted by $(\cdot,
\cdot)_{L^2} $ and the associated norm by $ | \cdot |_{L^2}. $
Moreover, the space $V $ is endowed with the scalar product
$$ ((u, v)) = \Sum^2_{i=1} (\partial_{x_i} u, \partial_{x_i} v)_{L^2}, \ \
\Arrowvert u \Arrowvert = ((u, u))^{1/2},$$ and we have the
Poincar\'e inequality
\beq\label{Poin} |u|_{L^2}^2\leq c_\Omega
\|u\|^2, \quad \forall u \in V.
\eeq

We now define the operator $A $ by
$$ A u = -\P \Delta u, \ \forall u \in D(A) = H^2(\M) \cap V, $$
where $\P $ is the Leray-Helmotz projector of $L^2(\M) $ onto $H. $
Then $A $ is a self-adjoint positive unbounded operator in $H $
which is associated with the scalar product defined above.
Furthermore, $ A^{-1} $ is a compact linear operator on $H $ and $ |
A \cdot |_{L^2} $ is a norm on $D(A) $, equivalent to the
$H^2$-norm.

Note that from (\ref{s1}), we can find $\gamma > 0 $ such that
\begin{equation}\label{s3}
\Lim_{|r| \rightarrow + \infty} f^{'}(r) > 2 \gamma > 0.
\end{equation}
We define the linear positive unbounded  operator $\Ag $ on
$L^2(\M)$ by:
\begin{equation}\label{s4}
\Ag \phi = - \Delta \phi + \gamma \phi, \ \forall \phi \in D(\Ag),
\end{equation}
where $$D(\Ag) = \left \{ \ro \in H^2(\M); \ \frac{\partial
\ro}{\partial \eta} = 0 \hbox{ on } \partial \M \right \}.
$$

Note that $ A_{\gamma}^{-1} $ is a compact linear operator on
$L^2(\M) $ and $| \Ag \cdot |_{L^2} $ is a norm on $D(\Ag) $ that is
equivalent to the $H^2$-norm.

We introduce the bilinear operators $B_0, $ $B_1$ (and their
associated trilinear forms $b_0, b_1 )$ as well as the coupling
mapping $R_0, $ which are defined from $D(A) \times D(A) $ into $H,
$  $D(A) \times D(\Ag) $ into $L^2(\M), $ and $L^2(\M) \times
D(A_{\gamma}^{3/2}) $ into $H, $ respectively. More precisely, we
set
\begin{eqnarray}
(B_0(u,v),w) = \Int_{\M} [(u \cdot \nabla) v] \cdot w \, dx =
b_0(u,v,w), \
\forall u, v, w \in D(A), \no\\
(B_1(u,\phi),\psi) = \Int_{\M} [(u \cdot \nabla) \phi]  \psi \, dx =
b_1(u,\phi,\psi), \ \forall u \in D(A), \ \phi, \psi \in D(\Ag), \no\\
(R_0(\mu,\phi),w) = \Int_{\M} \mu  [\nabla \phi \cdot w ] \, dx =
b_1(w,\phi,\mu), \ \forall w \in D(A), \ (\mu, \phi) \in L^2(\M)
\times D(A_{\gamma}^{3/2}).\no
\end{eqnarray}
Note that $$ R_0(\mu, \phi) = \P \mu \nabla \phi,$$ and
\beq\label{b0.1}
   |b_0(u,v,w)| \le c_b |u|^{1/2}_{L^2} \|u\|^{1/2} \|v\| |w|^{1/2}_{L^2} \|w\|^{1/2}, \quad\forall \, u, v, w  \in V,
\eeq
\beq\label{b0.2}
   |b_0(u,v,w)| \leq c_b |u|^{1/2}_{L^2} | A u|^{1/2}_{L^2} \| v \| |w|_{L^2},\\ \forall \,u \in D(A), \, v\in V, \, w  \in H,
\eeq
\begin{equation}\label{b0.3}
\quad |b_0(u,v,w)| \le c_b |u|^{1/2}_{L^2} \|u\|^{1/2} \|v\|^{1/2} |A v|^{1/2}_{L^2} |w|_{L^2},\\ \forall \, u \in V, v \in D(A), w \in H,
\eeq
\beq\label{b0.4}
   b_0(u,v,v)=0, \forall \, u,v \in V,
\eeq
the last equation implying
\begin{equation}\label{b0.5}
   b_0(u,v,w)=-b_0(u,w,v), \quad  \forall \, u,v,w \in V.
\end{equation}
Similar inequalities are valid for the trilinear form $b_1$:
\begin{equation}
   |b_1(u,\phi,\psi)| \le c_b |u|^{1/2}_{L^2} \|u\|^{1/2} \|\phi\| |\psi|^{1/2}_{L^2} \|\psi\|^{1/2},
  \forall \, u \in V, \phi, \psi \in H^1(\Omega),
\end{equation}
\begin{equation}\label{b2.1}
 \quad   |b_1(u,\phi,\psi)| \leq c_b |u|^{1/2}_{L^2} | A u|^{1/2}_{L^2} \| \phi \| |\psi|_{L^2},\forall \,u \in D(A), \, \phi\in H^1(\Omega), \, \psi  \in L^2(\Omega),
\end{equation}
\begin{eqnarray}\label{b3.1}
  |b_1(u,\phi,\psi)| \le c_b |u|^{1/2}_{L^2} \|u\|^{1/2} \|\phi\|^{1/2} |A\g \phi|^{1/2}_{L^2} |\psi|_{L^2},\\
  \forall \, u \in V, \phi \in D(A\g), \psi \in L^2(\Omega), \no
\end{eqnarray}
\begin{equation}\label{b4.1}
   b_1(u,\phi,\phi)=0,
   \quad b_1(u,\phi,f\g(\phi))=0, \quad \forall \, u \in V,\phi \in H^1(\Omega),
\end{equation}
\begin{equation}\label{b5.1}
   b_1(u,\phi,\psi)=-b_1(u,\psi,\phi), \quad \, \forall \, u \in V,\phi,\psi \in H^1(\Omega).
\end{equation}

Now we define the Hilbert spaces $ \Yo $ and $\Vo $ by
\begin{equation}\label{c1}
 \Yo = H \times H^1(\M),\ \ \Vo = V \times D(\Ag),
 \end{equation}
endowed with the scalar products whose associated norms are
\begin{equation}\label{c2}
\|(u,\phi) \|_{\Yo}^2 = \K^{-1} | u|^2_{L^2} + \epso (| \nabla \phi
|^2_{L^2}  + \gamma | \phi |^2_{L^2} )=:\K^{-1} | u|^2_{L^2} + \epso\| \phi \|\g^2,
 \end{equation}
 \begin{equation}\label{c21}
\ \ \Arrowvert (u,\phi)
\Arrowvert^2_{\Vo} = \Arrowvert u \Arrowvert^2 + | \Ag \phi
|^2_{L^2}.
  \end{equation}
 We also set
 \begin{equation}\label{fg}
  \fg (r) = f(r) - \alpha^{-1} \epso \gamma r
    \end{equation}
and observe that $\fg $ still satisfies (\ref{s3}) with $ \gamma $ in
place of $ 2 \gamma $ since, $ \epso \leq \alpha. $ Also, its
primitive, $\Fga(r) = \Int^r_0 \fg(\zeta) d \zeta $, is bounded from
below.

Throughout this article, we will denote by $c$ a generic positive
constant depending on the domain $\M. $

Using the notations above, we rewrite (\ref{ye6})--(\ref{b2}) as (see
\cite{gra1} for the details)
\begin{equation}\label{y6}
\left \{
\begin{array}{ll}
\Frac{d u}{d t} + \nuo A u + B_0(u,u) - \K R_0( \epso \Ag \phi,
\phi) = g, \ \hbox{
a.e. in } \M \times (0, + \infty), \\ \\
\mu = \epso \Ag \phi + \alpha \fg(\phi), \ \hbox{
a.e. in } \M \times (0, + \infty), \\ \\
\Frac{d \phi}{d t} + \mu + B_1(u, \phi) = 0, \ \hbox{ a.e. in } \M
\times (0, + \infty).
\end{array} \right. \end{equation}

The weak formulation of (\ref{y6}), (\ref{ye6}) was proposed and studied
in \cite{gra1,gra2}, and the existence and uniqueness of solution
was proved.

\section{A time discretization of (\ref{y6})}

In this article we consider a time discretization of (\ref{y6})
using the fully implicit Euler scheme,
\begin{equation}\label{q-ies}
\left \{
\begin{array}{ll}
\frac{u^n - u^{n-1}}{{\dt}}+ \nuo A u^n + B_0(u^n,u^n) - \K R_0( \epso \Ag \phi^n,
\phi^n) = g^n,  \\
\mu^n = \epso \Ag \phi^n + \alpha \fg(\phi^n), \\
\frac{\phi^n - \phi^{n-1}}{{\dt}} + \mu^n + B_1(u^n, \phi^n) = 0,\\
u^0 = u_0, \phi^0=\phi_0,
\end{array} \right.
\end{equation}
and prove that the attractors generated by the above system converge
to the attractor generated by the continuous system (\ref{y6}) as
the time-step converges to zero. To prove the existence of the
discrete attractors we need to use the theory of the multi-valued
attractors, that we discuss in Subsection \ref{ssec:abs}.

Throughout the article, we assume that $g \in L^\infty(\R_+; H)$ and
we let $\|g\|_{\infty} := \|g\|_{L^\infty(\R_+; H) }$.

\subsection{$\Yo$-Uniform Boundedness }
We begin with one of our main results, which proves the uniform
boundedness of the approximate solution $(u^n, \phi^n)$ in $\Yo$.
Once the $\Yo$-uniform stability is established, the $\Vo$-uniform
boundedness follows right away (see Proposition \ref{t:bdh} below).

\begin{Thm}\label{t:bdh}
Let $(u^n, \phi^n)$ be a solution of (\ref{q-ies}). Then there exists $\kappa>0$ such that for every ${\dt}>0$, we have
\beq\label{q:bdh}
  \|(u^n,\phi^n) \|_{\Yo}^2 \le (1+\kappa \dt)^{-n} Q^2(\|(u_0, \phi_0)\|_{\Yo})+\rho_0^2\left[1- \left(1+\kappa {\dt}\right)^{-n} \right],
    \>\forall\, n\ge0,
\eeq
where the monotonically increasing function $Q$ is independent of $n$, and $\rho_0$ (given in
(\ref{rho}) below), is independent of the initial data.

Moreover, there exists $K_1=K_1(\|(u_0, \phi_0)\|_{\Yo},\gvi)$ such that for every ${\dt}>0$, we have
\beq\label{q:bdv}
  \|(u^n,\phi^n) \|_{\Yo} \le K_1,
    \>\forall\, n\ge0,
\eeq and for every $i=1, \cdots, n$ there exist $M_1=M_1(\|(u^{i-1},
\phi^{i-1})\|_{\Yo}, \gvi, (n-i+1)k)$ and $M_2=M_2(\|(u^{i-1},
\phi^{i-1})\|_{\Yo}, \gvi, (n-i+1)k)$, increasing in their
arguments, such that
\beq\label{M1}
   k \sum_{j=i}^n \left( \frac{\nuo}{2\K} \|u^n\|^2+ 2  |\mu^n|^2_{L^2}\right) \le  M_1,
\eeq
\beq\label{M2}
   k \sum_{j=i}^n |\Ag(\phi^j)|^2_{L^2} \le  M_2.
\eeq
\end{Thm}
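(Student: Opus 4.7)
The strategy is a discrete energy estimate mirroring the continuous one from \cite{gra1}. I would test the first line of (\ref{q-ies}) in $H$ with $\K^{-1}u^n$ and the third line with $\mu^n$ in $L^2(\M)$, then add the two identities. The cubic term $(B_0(u^n,u^n),u^n)$ vanishes by (\ref{b0.4}), while the two coupling contributions cancel exactly: expanding $\mu^n=\epso\Ag\phi^n+\alpha\fg(\phi^n)$ inside $b_1(u^n,\phi^n,\mu^n)$ and using $b_1(u^n,\phi^n,\fg(\phi^n))=0$ from (\ref{b4.1}) gives
\[
b_1(u^n,\phi^n,\mu^n)=\epso b_1(u^n,\phi^n,\Ag\phi^n)=(R_0(\epso\Ag\phi^n,\phi^n),u^n),
\]
which is precisely the term inherited from the momentum equation, but with opposite sign after the sum, so the two cancel.

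Next I would apply the standard discrete identity $(a-b,a)=\frac12(|a|^2-|b|^2+|a-b|^2)$ to the difference quotients. The velocity part yields $\frac{\K^{-1}}{2k}[|u^n|^2_{L^2}-|u^{n-1}|^2_{L^2}+|u^n-u^{n-1}|^2_{L^2}]$, and the $\epso\Ag\phi^n$-piece inside $(\phi^n-\phi^{n-1},\mu^n)$ yields $\frac{\epso}{2k}[\|\phi^n\|_\gamma^2-\|\phi^{n-1}\|_\gamma^2+\|\phi^n-\phi^{n-1}\|_\gamma^2]$; together these reconstitute $\frac{1}{2k}(\|(u^n,\phi^n)\|_{\Yo}^2-\|(u^{n-1},\phi^{n-1})\|_{\Yo}^2)$ plus a nonnegative defect. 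For the $\alpha\fg(\phi^n)$-piece I use that (\ref{s1}) implies $\fg'=\Fga''\ge -C_0$ on all of $\R$, so the semiconvex inequality
\[
(\phi^n-\phi^{n-1},\fg(\phi^n))\ge\int_{\M}[\Fga(\phi^n)-\Fga(\phi^{n-1})]\,dx-\frac{C_0}{2}|\phi^n-\phi^{n-1}|^2_{L^2}
\]
applies; the bad quadratic term is absorbed into the coercive defect $\frac{\epso\gamma}{2k}|\phi^n-\phi^{n-1}|^2_{L^2}$ built into $\|\phi^n-\phi^{n-1}\|_\gamma^2$, by choosing $\gamma$ suitably large. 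Finally, Cauchy--Schwarz and Young applied to $\K^{-1}(g^n,u^n)$ cede $c\gvi^2$ to the right and retain at least $\frac12\nuo\K^{-1}\|u^n\|^2$ on the left.

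To close the estimate and apply the discrete Gronwall lemma I need a coercivity statement of the form (dissipation)$\ge\kappa E^n$, where $E^n:=\|(u^n,\phi^n)\|_{\Yo}^2+2\alpha\int_\M\Fga(\phi^n)\,dx+c$. The Poincar\'e inequality (\ref{Poin}) controls $|u^n|^2_{L^2}$ by $\|u^n\|^2$; for the order parameter, testing $\mu^n=\epso\Ag\phi^n+\alpha\fg(\phi^n)$ against $\phi^n$, using the coercivity $\fg(r)r\ge c r^2-c'$ (a consequence of (\ref{s3})), and applying Cauchy--Schwarz with a Young split, one obtains $|\mu^n|^2_{L^2}\ge c\|\phi^n\|_\gamma^2+c\int_\M\Fga(\phi^n)\,dx-c'$. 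This yields the scalar recursion $E^n-E^{n-1}+\kappa k E^n\le \kappa k\rho_0^2$, whose solution is exactly (\ref{q:bdh}); the monotonically increasing function $Q$ appears when converting $\|\phi_0\|$ into a bound on $\int\Fga(\phi_0)\,dx$ via the growth (\ref{s2}) and the 2D embedding $H^1(\M)\hookrightarrow L^{m+2}(\M)$. Estimate (\ref{q:bdv}) is an immediate consequence.

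For (\ref{M1}) and (\ref{M2}) I would not invoke Gronwall. Returning to the one-step energy inequality and telescoping from $j=i$ to $n$, the terms $\frac{\nuo}{\K}\|u^j\|^2$ and $2|\mu^j|^2_{L^2}$ accumulate on the left and the right is controlled by $E^{i-1}+(n-i+1)k\,c\gvi^2$, giving (\ref{M1}). For (\ref{M2}), write $\epso\Ag\phi^j=\mu^j-\alpha\fg(\phi^j)$, so $|\Ag\phi^j|^2_{L^2}\le 2\epso^{-2}(|\mu^j|^2_{L^2}+\alpha^2|\fg(\phi^j)|^2_{L^2})$; the growth bound (\ref{s2}) together with the 2D embedding $H^1(\M)\hookrightarrow L^{2m+2}(\M)$ and the already established $\Yo$-bound (\ref{q:bdv}) on $\phi^j$ control the nonlinear term uniformly in $j$, and then summing together with (\ref{M1}) yields (\ref{M2}). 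The main obstacle is the semiconvex step: making the defect $-\frac{C_0\alpha}{2}|\phi^n-\phi^{n-1}|^2_{L^2}$ be absorbed \emph{uniformly in $k>0$} (rather than only for $k$ small) is what allows (\ref{q:bdh}) to hold without any smallness hypothesis on the time step, and rests on choosing $\gamma$ large enough together with the standing assumption $\epso\le\alpha$.
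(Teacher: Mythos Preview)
Your overall architecture matches the paper's, but the crucial absorption step for the semiconvex defect has a genuine gap. You write that the bad term $-\tfrac{C_0\alpha}{2}|\phi^n-\phi^{n-1}|^2_{L^2}$ is absorbed into the positive defect $\epso\gamma|\phi^n-\phi^{n-1}|^2_{L^2}$ coming from $\|\phi^n-\phi^{n-1}\|_\gamma^2$ ``by choosing $\gamma$ suitably large.'' But $\gamma$ is \emph{not} a free parameter: by (\ref{s3}) it must satisfy $2\gamma<\lim_{|r|\to\infty}f'(r)$, so it is bounded from above. Worse, even if $\gamma$ were free, the algebra does not close: since $f_\gamma'=f'-\alpha^{-1}\epso\gamma$, the sharp constant is $C_0=-\inf f'+\alpha^{-1}\epso\gamma$, and the inequality $\epso\gamma\ge C_0\alpha$ reduces to $\inf f'\ge 0$, which is false for the double-well potentials of interest. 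This is exactly why the paper (i) introduces the \emph{additional} structural assumption $f'(r)\ge -\tfrac{1}{2\alpha}$ (equation (\ref{cf1}) in the proof) and (ii) performs an extra test of the third equation of (\ref{q-ies}) with $2k\phi^n$. The second test contributes a further nonnegative defect $|\phi^n-\phi^{n-1}|^2_{L^2}$, and under (\ref{cf1}) the combined balance $\epso\gamma+1\ge \tfrac12+\epso\gamma$ then holds for every $k>0$.

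That same extra $\phi^n$-test is also how the paper obtains the $\phi$-coercivity needed for the recursion $E^n-E^{n-1}+\kappa kE^n\le k\Lambda^n$: it produces the dissipation terms $2k\epso\|\phi^n\|_\gamma^2$ and $2\alpha k(\fg(\phi^n),\phi^n)_{L^2}$ directly, and the paper's $E^n$ accordingly carries an additional $|\phi^n|^2_{L^2}$ summand. Your alternative route---bounding $|\mu^n|^2_{L^2}$ from below by $c\|\phi^n\|_\gamma^2+c\int\Fga(\phi^n)-c'$ via testing $\mu^n$ against $\phi^n$ and Young's inequality---can be made to work for coercivity, but it does not supply the missing $|\phi^n-\phi^{n-1}|^2_{L^2}$ defect. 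Without either (\ref{cf1}) or the second test function, the estimate (\ref{q:bdh}) cannot be obtained uniformly in $k>0$.
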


\begin{proof}
Taking the scalar product of the first equation of  (\ref{q-ies}) with $2 {\dt} u^n$ in $L^2$
and using the relation
\begin{equation}
      2(\varphi - \psi, \varphi)_{L^2}=|\varphi|^2_{L^2}-|\psi|^2_{L^2}+|\varphi-\psi|^2_{L^2},
\eeq
and the skew property (\ref{b0.4}), we obtain
\begin{eqnarray}
      |u^n|^2_{L^2} &-& |u^{n-1}|^2_{L^2} + |u^n - u^{n-1}|^2_{L^2}
         + 2 \nuo {\dt} \|u^n\|^2 \label{2.23}\\
         & -& 2 \K \dt b_1(u^n, \phi^n, \epso \Ag \phi^n)= 2  {\dt} (g^n,u^n)\l. \nonumber
\end{eqnarray}

Using the second equations of  (\ref{q-ies}) and of (\ref{b4.1}), we
have $b_1(u^n, \phi^n, \epso \Ag \phi^n)=b_1(u^n, \phi^n,\mu^n)$
and thus (\ref{2.23}) becomes
\begin{eqnarray}
      |u^n|^2_{L^2} &-& |u^{n-1}|^2_{L^2} + |u^n - u^{n-1}|^2_{L^2}
         + 2 \nuo {\dt} \|u^n\|^2 \label{2.25}\\
         &-&2 \K \dt b_1(u^n, \phi^n, \mu^n)= 2  {\dt} (g^n,u^n)\l. \nonumber
\end{eqnarray}

Multiplying the third equation of (\ref{q-ies}) by $2 {\dt} \mu^n$ and integrating we obtain
\beq\label{2.26}
2(\phi^n-\phi^{n-1}, \mu^n)\l+2\dt |\mu^n|^2_{L^2}+2\dt b_1(u^n, \phi^n, \mu^n)=0.
\eeq
Dividing (\ref{2.25}) by $\K$ and adding the resulting equation to (\ref{2.26}), we find
\begin{eqnarray}
     &\frac{1}{\K}& \left[|u^n|^2_{L^2} - |u^{n-1}|^2_{L^2}+  |u^n - u^{n-1}|^2_{L^2} \right]
        +\frac{ 2 \nuo}{\K} {\dt} \|u^n\|^2 \nonumber\\
         & +&2(\phi^n-\phi^{n-1}, \mu^n)\l+2\dt |\mu^n|^2_{L^2}=  \frac{2}{\K} {\dt} (g^n,u^n)\l. \label{2.27}
\end{eqnarray}
Using the second equation of  (\ref{q-ies}), (\ref{s4}) and (\ref{fg}), we obtain
\begin{eqnarray}
2(\phi^n-\phi^{n-1}, \mu^n)\l=\epso \left(\|\ph\|\g^2-\|\phn\|\g^2+\|\ph-\phn\|\g^2 \right) + 2 \alpha (\ph-\phn, f\g(\ph))\l. \no
\end{eqnarray}
\begin{eqnarray}\label{2.30}
2 \alpha (\ph-\phn, f\g (\ph))\l
&=&2 \alpha  \F\g(\ph)-2 \alpha\F\g(\phn)+2 \alpha \R\g ^n,
\end{eqnarray}
where
\begin{equation}\label{2.31}
\F\g(\ph)=\int_\Omega F\g (\phi^n(x)) dx,
\end{equation}
\begin{equation}\label{2.32}
\R\g ^n=-\int_\Omega \int_0^1 \left[f\g \left(\phi^{n-1}(x)+t(\phi^n(x)-\phi^{n-1}(x))\right)-f\g (\phi^n(x))\right](\phi^n(x)-\phi^{n-1}(x)) \, dt \, dx,
\end{equation}
and thus
\begin{eqnarray}
2(\phi^n-\phi^{n-1}, \mu^n)\l = &\epso& \left(\|\ph\|\g ^2-\|\phn\|\g ^2+\|\ph-\phn\|\g ^2 \right)+  2 \alpha  \F\g(\ph)-2 \alpha\F\g(\phn)+2 \alpha \R\g ^n. \nonumber
\end{eqnarray}
Combining the above relation with (\ref{2.27}) we find
\begin{eqnarray}
    & \qquad \qquad \frac{1}{\K}& \left[|u^n|^2_{L^2} - |u^{n-1}|^2_{L^2} +  |u^n - u^{n-1}|^2_{L^2} \right]
          +\epso \left(\|\ph\|\g ^2-\|\phn\|\g ^2+\|\ph-\phn\|\g ^2 \right)\nonumber \\
         & \qquad \qquad & +\frac{ 2 \nuo}{\K} {\dt} \|u^n\|^2+ 2 \alpha  \F\g(\ph)-2 \alpha\F\g(\phn)+2 \alpha \R\g ^n+2\dt |\mu^n|^2_{L^2}=  \frac{2}{\K} {\dt} (g^n,u^n)\l. \label{2.34}
\end{eqnarray}

Multiplying the third equation of (\ref{q-ies}) by $2 {\dt} \phi^n$ and integrating we obtain (using the second equation of (\ref{q-ies}))
\begin{equation}
 \qquad |\ph|^2_{L^2}-|\phn|^2_{L^2}+|\ph-\phn|^2_{L^2} + 2 \dt \epso \|\phi^n\|^2\g + 2 \alpha \dt (\fg(\phi^n), \phi^n)\l=0.\label{2.36}
\end{equation}
Adding (\ref{2.34}) and (\ref{2.36}) we find
\begin{eqnarray}
     &\frac{1}{\K}\left[|u^n|^2_{L^2} - |u^{n-1}|^2_{L^2} +  |u^n - u^{n-1}|^2_{L^2} \right]+\epso \left(\|\ph\|\g^2-\|\phn\|\g^2+\|\ph-\phn\|\g^2 \right) \nonumber\\
          &  \qquad \qquad  \qquad+ |\ph|^2_{L^2}-|\phn|^2_{L^2}+|\ph-\phn|^2_{L^2}+ 2 \alpha  \F\g(\ph)-2 \alpha\F\g(\phn) \label{2.37}\\
         & \qquad+ \frac{ 2 \nuo}{\K} {\dt} \|u^n\|^2+2 \dt \epso \|\phi^n\|^2\g +2\dt |\mu^n|^2_{L^2}+ 2 \alpha \dt (\fg(\phi^n), \phi^n)\l +2 \alpha \R\g^n =  \frac{2}{\K} {\dt} (g^n,u^n)\l. \nonumber
\end{eqnarray}
Using the Cauchy--Schwarz inequality and the Poincar\'e inequality
(\ref{Poin}), we majorize the right-hand side of (\ref{2.37}) by
\begin{eqnarray}
     \frac{2}{\K} {\dt} (g^n,u^n)\l
      & \leq& \frac{2}{\K} {\dt} |g^n|\l |u^n|\l \leq \frac{2 }{\K} \sqrt {c_\Omega}{\dt} |g^n|\l \|u^n\| \nonumber\\
      & \leq& \frac{\nuo}{\K} {\dt}\|u^n\|^2 + \frac{c_\Omega}{\nuo \K} \dt |g^n|_{L^2}^2. \label{2.38}
\end{eqnarray}

Hereafter, we assume that the potential function $f $ satisfies the
following  additional condition:
\begin{equation}\label{cf1}
f'(r)\geq -\frac{1}{2\alpha}, \ \forall r \in \R.
\end{equation}

Now, using the mean value theorem and recalling (\ref{fg}), as
well as (\ref{cf1}), relation (\ref{2.32}) yields
\begin{eqnarray}\label{2.39}
2 \alpha \R\g^n \geq -\frac{1}{2} |\phi^n-\phi^{n-1}|^2_{L^2} - \epso \gamma |\phi^n-\phi^{n-1}|^2_{L^2}.
\end{eqnarray}

Relations (\ref{2.37}), (\ref{2.38}), (\ref{2.39}) and (\ref{c2})
give
\begin{eqnarray}
 \frac{1}{\K} &\left[|u^n|^2_{L^2} - |u^{n-1}|^2_{L^2} +  |u^n - u^{n-1}|^2_{L^2} \right]+\epso \|\ph\|\g^2- \epso\|\phn\|\g^2+\epso \|\ph-\phn\|^2  \nonumber\\
          & +|\ph|^2_{L^2}-|\phn|^2_{L^2}+\frac{1}{2}|\ph-\phn|^2_{L^2}+ 2 \alpha  \F\g(\ph)-2 \alpha\F\g(\phn) \label{2.431}\\
         & +\frac{ \nuo}{\K} {\dt} \|u^n\|^2+2 \dt \epso \|\phi^n\|^2\g +2\dt |\mu^n|^2_{L^2}+ 2 \alpha \dt (\fg(\phi^n), \phi^n)\l  \leq \frac{c_\Omega}{\nuo \K} \dt |g^n|_{L^2}^2.  \nonumber
\end{eqnarray}

Now, for any $n\geq 1$, let
\begin{equation}\label{2.44}
E^n=\frac{1}{\K}|u^n|^2_{L^2}+\epso \|\ph\|\g^2+2 \alpha  \F\g(\ph) +|\ph|^2_{L^2}+2 \alpha C_{F\g}|\Omega|,
\end{equation}
where $C_{F\g}$ is taken large enough to ensure that $E^n\geq 0$
(recall that $F\g$ is bounded from below by a constant independent
of $\nuo$ and $\alpha$). We rewrite (\ref{2.431}) in the form
\begin{equation}\label{2.451}
E^n-E^{n-1}+\kappa \dt E^{n}\leq \dt \Lambda^n,
\end{equation}
where $\kappa \in (0,1)$ is to be determined and
\begin{eqnarray}
\Lambda^n& =&-\frac{ \nuo}{\K} \|u^n\|^2+ \frac{\kappa}{\K} |u^n|^2_{L^2}-(2-\kappa) \epso \|\phi^n\|^2\g+\kappa|\ph|^2_{L^2}+2 \alpha \kappa C_{F\g}|\Omega|  \nonumber\\
& -& 2 |\mu^n|^2_{L^2} +\frac{c_\Omega}{\nuo \K} |g^n|_{L^2}^2+2 \alpha \left[\kappa
\left(F\g(\ph)-\fg(\phi^n)\phi^n,1\right)\l-(1-\kappa)\left(\fg(\phi^n)\phi^n,1\right)\l\right].  \nonumber
\end{eqnarray}

Now note that owing to assumption  (\ref{s1}), we have (for any $r\in \R$)
\begin{equation}\label{2.40}
f\g(r)r\geq \frac{c_\star}{2}|f\g(r)|(1+|r|)-\frac{c_f}{2}(1+\alpha^{-1}\epso),\no
\end{equation}
\begin{equation}\label{2.41}
F\g(r)-f\g(r)r \leq c_f'(1+\alpha^{-1}\epso)|r|^2+c_f'', \no
\end{equation}
\begin{equation}\label{2.42}
|F\g(r)| \leq |f\g(r)|(1+|r|)+c_1, \no
\end{equation}
where $c_f, c_\star, c_f', c_f'', c_1$ are positive, sufficiently large constants that depend on $f$ only.

Using the above inequalities and the Poincar\'e inequality (\ref{Poin}), we obtain the following bound on $\Lambda^n$:
\begin{eqnarray}\label{2.45}
\Lambda^n &\leq&
-\frac{ 1}{\K}(\nuo-\kappa c_\Omega) \|u^n\|^2-(2-\kappa) \epso \|\phi^n\|^2-\left[2-\frac{\kappa}{\epso \gamma}\left(1+\epso \gamma + 2c_f'(\alpha+\epso)\right)\right]\epso\gamma|\ph|^2_{L^2} \nonumber\\
& & \qquad \qquad -2 |\mu^n|^2_{L^2} +\frac{c_\Omega}{\nuo \K}
|g^n|_{L^2}^2-(1-\kappa)c_\star \alpha (|F\g(\phi^n)|,1)\l+c_2,
\end{eqnarray}
where
\begin{equation}
c_2=\left[(1-\kappa)(c_1c_\star \alpha +c_f \alpha +c_f\epso) +2 \alpha \kappa C_{F\g}+2 \alpha \kappa  c_f''\right]|\Omega|.
\end{equation}
Choosing $\kappa\in (0,1)$ as
\begin{equation}\label{2.47}
\kappa =\min\left\{\frac{\nuo}{2c_\Omega},\frac{\epso \gamma}{1+\epso \gamma + 2c_f'(\alpha+\epso)}\right\},
\end{equation}
relation (\ref{2.45}) gives
\begin{eqnarray}\label{2.48}
\Lambda^n &\leq&  -\frac{\nuo}{2\K} \|u^n\|^2- \epso \|\phi^n\|^2\g
-2 |\mu^n|^2_{L^2} +\frac{c_\Omega}{\nuo \K}
|g^n|_{L^2}^2-(1-\kappa)c_\star \alpha (|F\g(\phi^n)|,1)\l+c_2,\no
\end{eqnarray}
and combining it with (\ref{2.451}), we obtain
\begin{eqnarray}
E^n&-&E^{n-1}+\kappa \dt E^{n}+\frac{1}{2} \dt \left( \frac{\nuo}{\K} \|u^n\|^2+ \epso \|\phi^n\|^2\g\right)+ 2 \dt |\mu^n|^2_{L^2} \nonumber\\
 & +&c_3 \dt|F\g(\phi^n)|_{L^1} \leq \frac{c_\Omega}{\nuo \K} \dt |g^n|_{L^2}^2+c_2 \dt. \label{2.49}
\end{eqnarray}
Neglecting some positive terms, we obtain
\begin{equation}\label{2.50}
      E^n \leq \frac{1}{\beta}E^{n-1}
         + \frac{1}{\beta}\dt \left(\frac{c_\Omega}{\nuo \K} |g^n|_{L^2}^2+c_2\right),
\end{equation}
where
\begin{equation}\label{2.51}
      \beta = 1+ \kappa {\dt}.
\end{equation}
Using recursively (\ref{2.50}), we find
\begin{eqnarray}\label{2.52}
 E^n &\leq& \frac{1}{\beta^n}E^0
                        + \dt
                        \sum_{i=1}^{n}\frac{1}{\beta^i}\left(\frac{c_\Omega}{\nuo \K} |g^{n+1-i}|_{L^2}^2 +c_2\right)\\
                  &\leq& \left(1+ \kappa {\dt} \right)^{-n}E^0
                        + \frac{1}{ \kappa}\left(\frac{c_\Omega}{\nuo \K} \gvi^2+c_2\right)
                         \left[1- \left(1+\kappa {\dt}\right)^{-n} \right]. \nonumber
\end{eqnarray}
Now observe that, due to (\ref{s1}), we can find $C_f>0$ such that
\begin{equation}\label{2.53}
E^n\leq C_f\left(1+\|(u^n, \phi^n)\|_{\Yo}^2+|\phi^n|^{m+2}_{L^{m+2}}\right),
\end{equation}
and thus, relation (\ref{2.52}) yields
\begin{equation}\label{2.54}
E^n \leq (1+\kappa \dt)^{-n} Q^2(\|(u_0, \phi_0)\|_{\Yo})+\rho_0^2\left[1- \left(1+\kappa {\dt}\right)^{-n} \right],
\end{equation}
where $Q$ is a monotonically increasing function of the initial data, independent of $n$, and
\begin{equation}\label{rho}
\rho_0^2=\frac{1}{ \kappa}\left(\frac{c_\Omega}{\nuo \K}
\gvi^2+c_2\right).
\end{equation}
We therefore obtain
\begin{equation}\label{2.57}
E^n \leq K_1^2=K_1^2(\|(u_0, \phi_0)\|_{\Yo},\gvi):=Q^2(\|(u_0, \phi_0)\|_{\Yo})+\rho_0^2, \, \forall  n\ge0,
\end{equation}
and since $\|(u^n, \phi^n)\|_{\Yo}^2\leq E^n$, relations (\ref{2.54}) and (\ref{2.57}) give (\ref{q:bdh}) and (\ref{q:bdv}), respectively.

Adding inequalities (\ref{2.49}) with $n$ from $i$ to $N$, dropping some positive terms,
and recalling (\ref{2.53}), we obtain conclusion (\ref{M1}) of the theorem.

Now using (\ref{q-ies}), (\ref{fg}), (\ref{s2})  and the Sobolev imbedding $H^1(\Omega) \hookrightarrow L^{2m+2}(\Omega)$, for any $m$, we obtain
\begin{eqnarray}\label{2.58}
|\Ag \phi^n|^2_{L^2}&\leq& \frac{3}{\nu_2^2} |\mu^n|^2_{L^2}  + \frac{6\beta^2}{\nu_2^2} c_f^2  |\Omega|+ \frac{6\beta^2}{\nu_2^2} c_f''  (\|\phi^n\|\g^2)^{m+1}+3 \gamma^2 |\phi^n|^2_{L^2},
 \end{eqnarray}
for some positive constant $c_f''$ depending on $c_f$. Summing with $n$ from $i$ to $N$ and using (\ref{M1}) and  (\ref{2.57}), we obtain conclusion (\ref{M2}).
This completes the proof of the theorem.
\end{proof}

%=========================================================================================

As a direct consequence of (\ref{q:bdh}), we have

\begin{Cor}\label{C1}
If
\beq\label{q:k1}
   0 < \dt \leq \frac{1}{\kappa },
\eeq then  $B_{\Yo}(0, \sqrt 2 \rho_0)$, the ball in $\Yo$ centered
at $0$ and radius $ \sqrt 2\rho_0$, is an absorbing ball for
$({u}^n, \phi^n)$ in $\Yo$.
\end{Cor}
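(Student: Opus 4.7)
The plan is to derive the corollary directly from the sharp estimate \eqref{q:bdh} in Theorem \ref{t:bdh}. Fix $k \in (0, 1/\kappa]$ and an arbitrary bounded set $B \subset \Yo$, and set $R := \sup_{(u_0,\phi_0)\in B} \|(u_0,\phi_0)\|_{\Yo}$. By monotonicity of $Q$, we have $Q(\|(u_0,\phi_0)\|_{\Yo}) \leq Q(R)$ for every initial datum in $B$. Hence \eqref{q:bdh} gives
\[
\|(u^n, \phi^n)\|_{\Yo}^2 \leq (1+\kappa k)^{-n} Q^2(R) + \rho_0^2\bigl[1 - (1+\kappa k)^{-n}\bigr] \leq (1+\kappa k)^{-n} Q^2(R) + \rho_0^2
\]
for every $n \geq 0$ and every $(u_0,\phi_0) \in B$.

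Since $1 + \kappa k > 1$, the first term on the right decays to zero as $n \to \infty$. Setting
\[
n_0 = n_0(B, k) := \left\lceil \frac{\log\!\bigl(Q^2(R)/\rho_0^2\bigr)}{\log(1+\kappa k)} \right\rceil,
\]
we obtain $(1+\kappa k)^{-n} Q^2(R) \leq \rho_0^2$ whenever $n \geq n_0$, and therefore $\|(u^n, \phi^n)\|_{\Yo}^2 \leq 2\rho_0^2$. Equivalently, $(u^n, \phi^n) \in B_{\Yo}(0, \sqrt{2}\rho_0)$ for all $n \geq n_0(B,k)$ and all $(u_0,\phi_0) \in B$, which is precisely the absorbing property for the discrete semigroup generated by \eqref{q-ies}.

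There is no real obstacle here: the argument is a routine extraction of an absorbing-set conclusion from an exponential-decay bound, and collapses to the two estimates above. The only subtle point worth noting is the role of the hypothesis $k \leq 1/\kappa$. For absorption at a single fixed $k$, any $k > 0$ would suffice, since $(1+\kappa k)^{-n} \to 0$ in $n$. The restriction $k \leq 1/\kappa$ instead serves to keep the physical absorption time $n_0 k$ uniformly bounded in $k$: on $(0, 1/\kappa]$ one has $\log(1+\kappa k) \geq \tfrac{1}{2}\kappa k$, yielding $n_0 k \leq 2 \log\!\bigl(Q^2(R)/\rho_0^2\bigr)/\kappa$. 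This uniform-in-$k$ absorption time, together with the $k$-independent radius $\sqrt{2}\rho_0$, is what will be needed in the multi-valued attractor framework of Section \ref{s5}.
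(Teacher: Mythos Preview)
Your proof is correct and matches the paper's approach exactly: the corollary is stated there as ``a direct consequence of \eqref{q:bdh}'' with no further argument, and you have simply written out the routine extraction of the absorbing set from the decay estimate. Your additional remark on the role of the hypothesis $k\le 1/\kappa$ (uniform-in-$k$ absorption time) is a helpful clarification not spelled out in the paper.
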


%=========================================================================================

\subsection{$\Vo$-Uniform Boundedness}

We now seek to obtain uniform bounds for $(u^n,\phi^n)$ in $\Vo$,
similar to those we have already obtained in $\Yo$ (see
(\ref{q:bdh}) above). In order to do this, we will first use the
discrete Gronwall lemma to derive an upper bound on
$\|(u^n,\phi^n)\|_{\Vo}$, $n \leq N$, for some $N >0$, and then we
will use the discrete uniform Gronwall lemma  to obtain an upper
bound on $\|(u^n,\phi^n)\|_{\Vo}$, $n \geq N$.

We begin with some preliminary inequalities.

%=====================================================================

\begin{Lem}\label{l2}
For every ${\dt}>0$, we have
\beq\label{1.71}
   \|(u^n,\phi^n)\|^2_{\Vo} \leq K_2 \|(u^{n-1},\phi^{n-1})\|^2_{\Vo}  + c_3(|f\g(\phi^n)|^2_{L^2}+|g^n|^2_{L^2}), \, \forall n\geq 1,
\eeq
where $K_2=K_2(\|(u_0, \phi_0)\|_{\Yo},\gvi)$ and $c_3>0$ are given below, in (\ref{1.81}) and (\ref{1.82}), respectively.
\end{Lem}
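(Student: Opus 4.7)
I would take the $L^2$ scalar product of the first (momentum) equation of (\ref{q-ies}) with $2kAu^n$, and extract control on $|A_\gamma\phi^n|^2_{L^2}$ from the elliptic identity $\epso A_\gamma\phi^n=\mu^n-\alpha f_\gamma(\phi^n)$, which yields $\epso^2|A_\gamma\phi^n|^2_{L^2}\le 2|\mu^n|^2_{L^2}+2\alpha^2|f_\gamma(\phi^n)|^2_{L^2}$; the quantity $|\mu^n|^2_{L^2}$ is in turn bounded by testing the third equation of (\ref{q-ies}) with $2k\mu^n$, exactly as in the proof of Theorem~\ref{t:bdh}. Via the discrete polar identity $2(u^n-u^{n-1},Au^n)_{L^2}=\|u^n\|^2-\|u^{n-1}\|^2+\|u^n-u^{n-1}\|^2$ and the cancellation $b_1(Au^n,\phi^n,f_\gamma(\phi^n))=0$ (valid because $Au^n\in H$ is divergence-free with vanishing normal trace, so $f_\gamma(\phi^n)\nabla\phi^n=\nabla F_\gamma(\phi^n)$ pairs to zero with $Au^n$), the momentum test produces
\begin{equation*}
\|u^n\|^2-\|u^{n-1}\|^2+\|u^n-u^{n-1}\|^2+2k\nu_1|Au^n|^2_{L^2}=-2k\,b_0(u^n,u^n,Au^n)+2k\K\,b_1(Au^n,\phi^n,\mu^n)+2k(g^n,Au^n)_{L^2}.
\end{equation*}

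I would bound the trilinear terms using (\ref{b0.2}), the 2D Ladyzhenskaya-type inequality $\|v\|^2_{L^4}\le c|v|_{L^2}\|v\|_{H^1}$, Young's inequality, and the $\Yo$-uniform bound $\|(u^n,\phi^n)\|_\Yo\le K_1$ from Theorem~\ref{t:bdh}. The key estimate $|b_0(u^n,u^n,Au^n)|\le c|u^n|^{1/2}_{L^2}\|u^n\|\,|Au^n|^{3/2}_{L^2}$ is handled by Young with exponents $(4/3,4)$ and $|u^n|_{L^2}\le K_1$, absorbing a $|Au^n|^2_{L^2}$ piece into the LHS and leaving a residual of the form $CK_1^2k\|u^n\|^4$; the coupling term $b_1(Au^n,\phi^n,\mu^n)$ is treated analogously using $\|\phi^n\|_\gamma\le K_1$ together with the $L^4$ embedding; the forcing is bounded by Cauchy--Schwarz and Young. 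The cross term $b_1(u^n,\phi^n,\mu^n)$ from the $\mu^n$-test is controlled the same way.

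The main obstacle is the residual quartic term $CK_1^2k\|u^n\|^4$, which cannot be absorbed into $|Au^n|^2_{L^2}$ by Young alone. My plan is to exploit the elementary estimate $\|u^n\|^2=(Au^n,u^n)_{L^2}\le|u^n|_{L^2}|Au^n|_{L^2}\le K_1|Au^n|_{L^2}$, which yields $\|u^n\|^4\le K_1^2|Au^n|^2_{L^2}$ and reduces the residual to $CK_1^4k|Au^n|^2_{L^2}$; this contribution is then recombined with the dissipative $|Au^n|^2_{L^2}$ term on the LHS. Adding the identity from the $\mu^n$-test and converting $|\mu^n|^2_{L^2}$ into $|A_\gamma\phi^n|^2_{L^2}$ through the elliptic identity, a careful rearrangement produces the claimed recursion $\|(u^n,\phi^n)\|^2_\Vo\le K_2\|(u^{n-1},\phi^{n-1})\|^2_\Vo+c_3(|f_\gamma(\phi^n)|^2_{L^2}+|g^n|^2_{L^2})$, with $K_2=K_2(K_1,\|g\|_\infty)=K_2(\|(u_0,\phi_0)\|_\Yo,\|g\|_\infty)$ independent of $k$ and $n$ (though possibly large, which is harmless for its intended use as input to the subsequent discrete Gr\"onwall arguments), and $c_3$ collecting the constants from the elliptic identity and the forcing bound.
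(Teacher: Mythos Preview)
Your approach has a genuine gap at the absorption step for the convective term. After testing the momentum equation with $2kAu^n$ and converting the quartic residual via $\|u^n\|^4 \le K_1^2|Au^n|^2_{L^2}$, you end up with $CK_1^4\, k\,|Au^n|^2_{L^2}$ on the right. ``Recombining'' this with the dissipative $2\nu_1 k\,|Au^n|^2_{L^2}$ on the left is only useful if $CK_1^4 < 2\nu_1$; since $K_1$ depends on $\|(u_0,\phi_0)\|_{\Yo}$ and $\|g\|_\infty$ and may be arbitrarily large, this fails in general. When the net coefficient of $k|Au^n|^2_{L^2}$ is negative you are left with an uncontrolled $|Au^n|^2_{L^2}$ on the right, which is strictly stronger than any term in (\ref{1.71}); enlarging $K_2$ cannot compensate. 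There is a second problem: testing the third equation with $2k\mu^n$ ``as in the proof of Theorem~\ref{t:bdh}'' yields only the summed bound (\ref{M1}), not a pointwise bound on $|\mu^n|^2_{L^2}$ uniform in $k$. The time-difference term $2(\phi^n-\phi^{n-1},\mu^n)_{L^2}$ carries no factor $k$, so any pointwise estimate on $|\mu^n|^2_{L^2}$ extracted this way blows up like $1/k$; hence your route to $|A_\gamma\phi^n|^2_{L^2}$ via the elliptic identity does not give the $k$-independent constant required in (\ref{1.71}). (The coupling term $b_1(Au^n,\phi^n,\epso A_\gamma\phi^n)$ is also problematic: bounding it via Agmon or Ladyzhenskaya forces an $|A_\gamma^{3/2}\phi^n|_{L^2}$ factor, and you have no $|A_\gamma^{3/2}\phi^n|^2_{L^2}$ dissipation to absorb it into.)

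The paper sidesteps all of this by testing with the \emph{increments}: the momentum equation with $\frac{2}{\K}k(u^n-u^{n-1})$ and the $\phi$-equation with $2\epso k\,A_\gamma(\phi^n-\phi^{n-1})$. The polar identity applied to $2\nu_1 k((u^n,u^n-u^{n-1}))$ and $2\epso^2 k(A_\gamma\phi^n,A_\gamma(\phi^n-\phi^{n-1}))_{L^2}$ produces $k\|u^n\|^2$ and $k|A_\gamma\phi^n|^2_{L^2}$ directly, with every term carrying a factor $k$ (so one divides through at the end with no $1/k$ artifacts). The convective term becomes $b_0(u^n,u^n,u^n-u^{n-1})=-b_0(u^n,u^n,u^{n-1})$, bounded by $c_b|u^n|_{L^2}\|u^n\|\|u^{n-1}\|$; Young together with $|u^n|_{L^2}\le K_1$ splits this into an absorbable $\frac{\nu_1}{2\K}\|u^n\|^2$ and the desired $CK_1^2\|u^{n-1}\|^2$. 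The two coupling trilinear terms are rearranged algebraically into $b_1(u^n-u^{n-1},\phi^n,A_\gamma\phi^{n-1})$ and $b_1(u^{n-1},\phi^n,A_\gamma(\phi^n-\phi^{n-1}))$ (see (\ref{1.78})) and bounded via (\ref{b3.1}); the higher-order pieces $\|u^n-u^{n-1}\|^2$ and $|A_\gamma(\phi^n-\phi^{n-1})|^2_{L^2}$ are absorbed into matching positive increment terms on the left. Dropping those increments and dividing by $k$ gives (\ref{1.71}) with the explicit constants (\ref{1.81})--(\ref{1.82}).
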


\begin{proof}
Multiplying the first equation of (\ref{q-ies}) by $2/\K \,{\dt}
(u^n-u^{n-1})$, the third equation of (\ref{q-ies}) by $2 \epso {\dt}A\g( \phi^n-\phi^{n-1})$, adding the resulting equations  and integrating, we obtain (using the second equation of (\ref{q-ies}))
\begin{eqnarray}\label{1.74}
     & & \qquad \qquad \qquad\frac{2}{\K}|u^n - u^{n-1}|^2_{L^2} +2 \epso\|\phi^n-\phi^{n-1}\|^2\g + \frac{ \nuo}{\K} {\dt} \|u^n\|^2 +\nu_2^2{\dt} |A\g\phi^n|^2_{L^2} \\
     &  - & \frac{ \nuo}{\K} \dt \|u^{n-1}\|^2 - \nu_2^2{\dt} |A\g\phi^{n-1}|^2_{L^2} +\frac{ \nuo}{\K} \dt \|u^n - u^{n-1}\|^2 +\nu_2^2{\dt} |A\g(\phi^n-\phi^{n-1})|^2_{L^2} \no\\
      & + & \frac{ 2}{\K}\dt\, b_0(u^n,u^n,u^n-u^{n-1}) - 2 \dt b_1(u^n-u^{n-1}, \phi^n, \epso \Ag \phi^n)+ 2 \epso \dt b_1(u^n, \phi^n, A\g( \phi^n-\phi^{n-1})) \no\\
      &=&  \frac{ 2}{\K}  {\dt} (g^n,u^n- u^{n-1})\l - 2 \epso \alpha \dt (\fg(\phi^n), A\g( \phi^n-\phi^{n-1}))\l. \no
\end{eqnarray}
Using the Cauchy--Schwarz inequality, we majorize the right-hand
side of (\ref{1.74}) by
\begin{eqnarray}\label{1.75}
      \frac{ 2}{\K}  {\dt} (g^n,u^n- u^{n-1})\l
       & \leq & \frac{ 2}{\K}  {\dt} |g^n|\l |u^n- u^{n-1}|\l \no\\
       & \leq & \frac{ 2}{\K} \sqrt{c_\Omega} \, {\dt} |g^n|\l \|u^n- u^{n-1}\| \quad (\textrm{by (\ref{Poin}}))\\
       & \leq & \frac{ \nuo}{4\K} \dt \|u^n- u^{n-1}\|^2  + \frac{ 4 c_\Omega}{ \nuo\K}  {\dt}  |g^n|^2_{L^2}, \no
\end{eqnarray}
\begin{eqnarray}\label{1.76}
2 \epso \alpha  \dt |(\fg(\phi^n), A\g( \phi^n-\phi^{n-1}))\l| &\leq& 2 \epso \alpha \dt |\fg(\phi^n)|\l | A\g( \phi^n-\phi^{n-1})|\l \no\\
& \leq & \frac{\nu_2^2}{4} \dt | A\g( \phi^n-\phi^{n-1})|^2_{L^2} + 4 \alpha ^2 \dt |\fg(\phi^n)|^2_{L^2}.
\end{eqnarray}
The nonlinear terms are bounded as follows:
\begin{eqnarray}\label{1.77}
 \frac{ 2}{\K}  \dt b_0(u^n,u^n,u^n-u^{n-1}) = -\frac{ 2}{\K} \dt b_0(u^n,u^n,u^{n-1}) \quad (\textrm{by (\ref{b0.4}})) \\
  \le \frac{ 2}{\K}   c_b \dt |u^n|_{L^2} \|u^n\| \|u^{n-1}\| \quad (\textrm{by (\ref{b0.1}}))
\le  \frac{\nu_1}{2\K} \dt \|u^n\|^2
   + \frac{2 c_b^2}{\nu_1} K_1^2 \dt \|u^{n-1}\|^2 \quad (\textrm{by } (\ref{q:bdv})), \no
\end{eqnarray}
\begin{eqnarray}\label{1.78}
- 2 \dt &b_1&(u^n-u^{n-1}, \phi^n, \epso \Ag \phi^n)+ 2 \epso \dt b_1(u^n, \phi^n, A\g( \phi^n-\phi^{n-1})) \no \\
& =& - 2\epso \dt b_1(u^n-u^{n-1}, \phi^n,  \Ag \phi^{n-1})+ 2 \epso \dt b_1(u^{n-1}, \phi^n, A\g( \phi^n-\phi^{n-1}))\no\\
& \leq & 2\epso c_b \dt |u^n-u^{n-1}|^{1/2}_{L^2} \|u^n-u^{n-1}\|^{1/2}\|\phi^n\|^{1/2} |A\g \phi^n|^{1/2}_{L^2} |A\g \phi^{n-1}|_{L^2}\no\\
& \quad & + 2\epso c_b \dt |u^{n-1}|^{1/2}_{L^2} \|u^{n-1}\|^{1/2}\|\phi^n\|^{1/2} |A\g \phi^n|^{1/2}_{L^2} |A\g( \phi^n-\phi^{n-1})|_{L^2}
\quad (\textrm{by (\ref{b3.1}}))\\
& \leq & \frac{ \nuo}{4\K} \dt \|u^n - u^{n-1}\|^2 + \frac{ \nu_2^2}{2}{\dt} |A\g\phi^n|^2_{L^2} + c K_1^2 \dt  |A\g \phi^{n-1}|_{L^2}^2\no\\
& \quad & +\frac{ \nu_2^2}{4}{\dt} |A\g( \phi^n-\phi^{n-1})|_{L^2}^2+ c K_1^4 \dt \|u^{n-1}\|^2 \quad (\textrm{by } (\ref{q:bdv})). \no
\end{eqnarray}
Relations (\ref{1.74})--(\ref{1.78}) yield
\begin{eqnarray}\label{1.79}
     && \qquad \frac{2}{\K} |u^n - u^{n-1}|^2_{L^2} +2 \epso\|\phi^n-\phi^{n-1}\|^2\g + \frac{\nu_1}{2\K} \dt \|u^n\|^2 +\frac{\nu_2^2}{2}{\dt} |A\g\phi^n|^2_{L^2} \no\\
     & & \qquad-\left(\frac{ \nuo}{\K} +\frac{2 c_b^2}{\nu_1} K_1^2  + c K_1^4 \right)\dt \|u^{n-1}\|^2  - (\nu_2^2 + c K_1^2) {\dt} |A\g\phi^{n-1}|^2_{L^2}+\frac{ \nuo}{2\K} \dt \|u^n - u^{n-1}\|^2 \no\\
      &  & \qquad \qquad+\frac{\nu_2^2}{2}{\dt} |A\g(\phi^n-\phi^{n-1})|^2_{L^2} \leq \frac{ 4 c_\Omega}{ \nuo\K}  {\dt}  |g^n|^2_{L^2}+ 4 \alpha ^2 \dt |\fg(\phi^n)|^2_{L^2},
\end{eqnarray}
and neglecting some positive terms we obtain
\begin{eqnarray}\label{1.80}
     && \frac{\nu_1}{2\K} \|u^n\|^2 +\frac{\nu_2^2}{2}|A\g\phi^n|^2_{L^2} \leq \left(\frac{ \nuo}{\K} +\frac{2 c_b^2}{\nu_1} K_1^2  + c K_1^4 \right) \|u^{n-1}\|^2 \no\\
     && + (\nu_2^2 + c K_1^2) |A\g\phi^{n-1}|^2_{L^2}+\frac{ 4 c_\Omega}{ \nuo\K} |g^n|^2_{L^2}+ 4 \alpha ^2 |\fg(\phi^n)|^2_{L^2}.
\end{eqnarray}
Taking
\beq\label{1.81}
K_2=2\left( \frac{\K}{\nu_1}
+\frac{1}{\nu_2^2}\right)\left(\frac{ \nuo}{\K} +\frac{2
c_b^2}{\nu_1} K_1^2  + c K_1^4+\nu_2^2 + c K_1^2 \right),
\eeq
and
\beq\label{1.82}
c_3=2\left( \frac{\K}{\nu_1}
+\frac{1}{\nu_2^2}\right)\left( \frac{ 4 c_\Omega}{ \nuo\K} +4
\alpha^2\right),
\eeq
we obtain conclusion (\ref{1.71}) of the lemma.
\end{proof}

%=======================================================================

\begin{Lem}\label{l3}
For every ${\dt}>0$ and for every $n \geq 1$ we have
\begin{eqnarray}\label{1.83}
  c_4 K_1^2 &{\dt}\Arrowvert (u^n,\phi^n)
\Arrowvert^4_{\Vo}- \Arrowvert (u^n,\phi^n)\Arrowvert^2_{\Vo}
     +\Arrowvert (u^{n-1},\phi^{n-1})
\Arrowvert^2_{\Vo} \no\\
& +  c_5 \dt \left(| \fg'(\phi^n) \nabla \phi^n|^2_{L^2}+|g^n|_{L^2}^2\right)\geq 0,
\end{eqnarray} for some positive constants $c_4$ and $c_5$.
\end{Lem}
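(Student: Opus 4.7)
The plan is to derive a discrete $\Vo$-energy inequality by performing two compatible tests on (\ref{q-ies})---the momentum equation in $L^2$ with $2kAu^n$, and the Allen--Cahn equation with $(2k/\epso)\Ag\mu^n$---then estimating all nonlinear and forcing contributions using Young's inequality, the trilinear estimates (\ref{b0.3})--(\ref{b3.1}), and the $\Yo$-bound $\|(u^n,\phi^n)\|_{\Yo}\le K_1$ from Theorem~\ref{t:bdh}. Because the 2D Navier--Stokes nonlinearity is not directly absorbable, the output will feature a non-absorbable residual of size $K_1^2 k\|(u^n,\phi^n)\|^4_{\Vo}$, which is precisely the dominant term in the statement (meant to be controlled later by the discrete uniform Gronwall lemma).

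The first test, via the polarization identity $2(u^n-u^{n-1},Au^n)_{L^2}=\|u^n\|^2-\|u^{n-1}\|^2+\|u^n-u^{n-1}\|^2$, produces the telescoping $\|u^n\|^2-\|u^{n-1}\|^2$ together with $2k\nuo|Au^n|^2_{L^2}$, the convective $2kb_0(u^n,u^n,Au^n)$, the coupling $-2k\K\epso b_1(Au^n,\phi^n,\Ag\phi^n)$ and the forcing $2k(g^n,Au^n)_{L^2}$. The second test, after using the self-adjointness $(\phi^n-\phi^{n-1},\Ag\mu^n)_{L^2}=(\Ag(\phi^n-\phi^{n-1}),\mu^n)_{L^2}$, substituting $\mu^n=\epso\Ag\phi^n+\alpha\fg(\phi^n)$, and applying a second polarization identity, produces the telescoping $|\Ag\phi^n|^2_{L^2}-|\Ag\phi^{n-1}|^2_{L^2}$, the positive dissipation $(2k/\epso)\|\mu^n\|^2_{\gamma}$, the positive defect $|\Ag(\phi^n-\phi^{n-1})|^2_{L^2}$, the transport term $(2k/\epso)b_1(u^n,\phi^n,\Ag\mu^n)$, and the potential cross term $(2\alpha/\epso)(\Ag(\phi^n-\phi^{n-1}),\fg(\phi^n))_{L^2}$. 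Integration by parts rewrites this last term as $(2\alpha/\epso)[(\nabla(\phi^n-\phi^{n-1}),\fg'(\phi^n)\nabla\phi^n)_{L^2}+\gamma(\phi^n-\phi^{n-1},\fg(\phi^n))_{L^2}]$, which is where the $|\fg'(\phi^n)\nabla\phi^n|^2_{L^2}$ in the statement is born. Adding the two identities collapses the telescopes into $\|(u^n,\phi^n)\|^2_{\Vo}-\|(u^{n-1},\phi^{n-1})\|^2_{\Vo}$ on the left, together with the absorbable positive defects $\|u^n-u^{n-1}\|^2$, $|\Ag(\phi^n-\phi^{n-1})|^2_{L^2}$, $2k\nuo|Au^n|^2_{L^2}$ and $(2k/\epso)\|\mu^n\|^2_{\gamma}$.

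Each residual on the right is then bounded with Young's inequality and the $\Yo$-bounds $|u^n|_{L^2}\le\sqrt{\K}K_1$ and $\|\phi^n\|_{\gamma}\le cK_1$. The Navier--Stokes defect uses Lions' 2D estimate $|b_0(u,u,Au)|\le c|u|^{1/2}_{L^2}\|u\||Au|^{3/2}_{L^2}$ and Young with exponents $4/3$ and $4$ to give $2k|b_0(u^n,u^n,Au^n)|\le\delta k|Au^n|^2_{L^2}+c_\delta kK_1^2\|u^n\|^4$; the forcing yields $2k|(g^n,Au^n)|\le\delta k|Au^n|^2_{L^2}+c_\delta k|g^n|^2_{L^2}$; and the potential cross term gives, via Cauchy--Schwarz and Young on the chain-rule expansion above, a contribution $\delta|\Ag(\phi^n-\phi^{n-1})|^2_{L^2}+c_\delta k|\fg'(\phi^n)\nabla\phi^n|^2_{L^2}$ plus lower-order pieces controlled by the $\Yo$-bound.

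The main obstacle is controlling the pair of coupling terms $-2k\K\epso b_1(Au^n,\phi^n,\Ag\phi^n)+(2k/\epso)b_1(u^n,\phi^n,\Ag\mu^n)$: a naive Sobolev/Ladyzhenskaya bound on $b_1(Au^n,\phi^n,\Ag\phi^n)$ would require an $H^3$-regularity of $\phi^n$ that the $\Vo$-norm does not provide. The cure is to rewrite the second coupling term by integration by parts (boundary terms vanish because $u^n=0$ on $\partial\M$), moving one derivative off $\Ag\mu^n$ onto $u^n\cdot\nabla\phi^n$ and thereby producing a form compatible both with the available trilinear inequalities (\ref{b2.1})--(\ref{b3.1}) and with the $(2k/\epso)\|\mu^n\|^2_\gamma$ dissipation already on the left. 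Together with the first coupling and Young's inequality, this yields a total contribution $\delta k|Au^n|^2_{L^2}+\delta(2k/\epso)\|\mu^n\|^2_{\gamma}+c_\delta kK_1^2|\Ag\phi^n|^4_{L^2}$. Choosing $\delta$ small enough to absorb every $\delta$-term into the corresponding LHS defect, and grouping $\|u^n\|^4+|\Ag\phi^n|^4_{L^2}\le\|(u^n,\phi^n)\|^4_{\Vo}$, yields the claimed inequality with $c_4$ and $c_5$ depending only on $\nuo,\epso,\K,\alpha,\gamma$ and the trilinear/embedding constants.
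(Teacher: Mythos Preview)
Your choice of test function for the Allen--Cahn equation creates a genuine gap. When you test the third equation of (\ref{q-ies}) with $(2k/\epso)\Ag\mu^n$ and move the time-difference term through self-adjointness as you describe, you obtain
\[
\frac{2}{\epso}\bigl(\Ag(\phi^n-\phi^{n-1}),\mu^n\bigr)_{L^2}
= |\Ag\phi^n|^2_{L^2}-|\Ag\phi^{n-1}|^2_{L^2}+|\Ag(\phi^n-\phi^{n-1})|^2_{L^2}
+\frac{2\alpha}{\epso}\bigl(\Ag(\phi^n-\phi^{n-1}),\fg(\phi^n)\bigr)_{L^2}.
\]
The last term carries \emph{no} factor of $k$: it came from the discrete time derivative, not from the $k\mu^n$ term. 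After your chain-rule expansion and Cauchy--Schwarz/Young, the best you can get is
\[
\delta\,|\Ag(\phi^n-\phi^{n-1})|^2_{L^2}+c_\delta\bigl(|\fg'(\phi^n)\nabla\phi^n|^2_{L^2}+|\fg(\phi^n)|^2_{L^2}\bigr),
\]
still with no $k$ on the second group. The bound you assert, $c_\delta k\,|\fg'(\phi^n)\nabla\phi^n|^2_{L^2}$, is therefore not obtainable from this estimate: Young's inequality cannot manufacture a factor of $k$ on one side without introducing $k^{-1}$ on the other, and there is no $k^{-1}$-weighted absorber available on the left. Since the statement of the lemma requires the $|\fg'(\phi^n)\nabla\phi^n|^2_{L^2}$ contribution to come with a factor $k$, your route does not prove (\ref{1.83}) as written.

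The paper avoids this by testing the Allen--Cahn equation with $2k\Ag^2\phi^n$ rather than with $(2k/\epso)\Ag\mu^n$. Then the time-difference term produces the clean telescoping $|\Ag\phi^n|^2_{L^2}-|\Ag\phi^{n-1}|^2_{L^2}+|\Ag(\phi^n-\phi^{n-1})|^2_{L^2}$ with no potential cross term, and the $\fg$ contribution enters instead through $2k(\mu^n,\Ag^2\phi^n)_{L^2}=2k\epso|\Ag^{3/2}\phi^n|^2_{L^2}+2k\alpha(\fg(\phi^n),\Ag^2\phi^n)_{L^2}$, which \emph{does} carry the factor $k$. Writing $(\fg(\phi^n),\Ag^2\phi^n)_{L^2}=(\Ag^{1/2}\fg(\phi^n),\Ag^{3/2}\phi^n)_{L^2}$ and using $|\Ag^{1/2}\fg(\phi^n)|_{L^2}\le c|\fg'(\phi^n)\nabla\phi^n|_{L^2}+c$ then gives exactly the $c_5k|\fg'(\phi^n)\nabla\phi^n|^2_{L^2}$ term in the lemma, absorbable against the dissipation $2k\epso|\Ag^{3/2}\phi^n|^2_{L^2}$. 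The remaining nonlinear terms $b_0(u^n,u^n,Au^n)$, $b_1(Au^n,\phi^n,\epso\Ag\phi^n)$ and $b_1(u^n,\phi^n,\Ag^2\phi^n)$ are handled individually via Agmon's and Ladyzhenskaya's inequalities (estimates (\ref{2.78})--(\ref{2.80})), each producing a piece of the $cK_1^2k\|(u^n,\phi^n)\|^4_{\Vo}$ term. Switching your second test function to $2k\Ag^2\phi^n$ and following this line repairs the argument.
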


\begin{proof}

Multiplying the first equation of (\ref{q-ies}) by $2 \dt Au^n$, the third equation of (\ref{q-ies}) by $2 {\dt} A\g^2
\phi^n$, adding the resulting equations and
integrating, we obtain (using the second equation of (\ref{q-ies}))
\begin{eqnarray}\label{2.75}
      \|u^n\|^2 + |A\g\ph|^2_{L^2}- (\|u^{n-1}\|^2+|A\g\phn|^2_{L^2})+ \|u^n - u^{n-1}\|^2 +|A\g(\ph-\phn)|^2_{L^2}\no\\
         + 2 \nuo {\dt} |Au^n|^2_{L^2} + 2 \dt \epso |\Ag ^{3/2}\phi^n|^2_{L^2}+2 \dt b_0(u^n, u^n, Au^n)-2 \K \dt b_1(Au^n, \phi^n, \epso \Ag \phi^n)\no\\
         +2 \dt b_1(u^n, \phi^n,A\g^2 \phi^n)= 2  {\dt} (g^n,Au^n)\l-2\alpha \dt (\fg(\phi^n),A\g^2 \phi^n)\l.
\end{eqnarray}
Using the Cauchy-Schwarz inequality, we estimate the right-hand side of the above equality as
\begin{equation}\label{2.76}
2  {\dt} (g^n,Au^n)\l \leq 2  {\dt} |g^n|\l|Au^n|\l \leq
\frac{\nuo}{4} \dt |Au^n|^2_{L^2}+\frac{4}{\nuo} \dt|g^n|_{L^2}^2,
\end{equation}
\begin{eqnarray}\label{2.77}
2\alpha \dt |(\fg(\phi^n),A\g^2 \phi^n)\l| =2\alpha \dt |(A\g^{1/2}\fg(\phi^n),A\g^{3/2} \phi^n)\l|\leq 2\alpha \dt |A\g^{1/2}\fg(\phi^n)|\l |A\g^{3/2} \phi^n|\l\no\\
 \leq c |\nabla \fg(\phi^n)|\l |A\g^{3/2} \phi^n|\l \leq \frac{\epso}{3} \dt |A\g^{3/2} \phi^n|^2_{L^2} + c \dt | \fg'(\phi^n) \nabla \phi^n|^2_{L^2}.
\end{eqnarray}
We bound the nonlinear terms as follows:
\begin{eqnarray}\label{2.78}
2 \dt |b_0(u^n, u^n, Au^n)| \leq 2 c_b  \dt  |u^n|^{1/2}_{L^2}
\|u^n\| |A u^n|^{3/2}_{L^2} \\
\leq \frac{\nuo}{4} \dt |A u^n|^2_{L^2}+ c \dt |u^n|^2_{L^2}  \|u^n\|^4, \no
\end{eqnarray}
\begin{eqnarray}\label{2.79}
2 \K \dt |b_1(Au^n, \phi^n, \epso \Ag \phi^n)|\leq 2 \epso \K  \dt |Au^n|_{L^2} |\nabla \phi^n|_{L^\infty} |\Ag \phi^n|_{L^2} \no\\
\leq c \epso \K  \dt |Au^n|_{L^2} \|\phi^n\|^{1/2} |A\g^{3/2} \phi^n|^{1/2}_{L^2} |A\g \phi^n|_{L^2} \no \\
 \quad (\textrm{by Agmon's inequality}))\\
\leq \frac{\epso}{3} \dt |A\g^{3/2} \phi^n|^2_{L^2}+ \frac{\nuo}{4} \dt |A u^n|^2_{L^2}+ c \dt \|\phi^n\|^2 |\Ag \phi^n|_{L^2}^4, \no
\end{eqnarray}
\begin{eqnarray}\label{2.80}
2 \dt |b_1(u^n, \phi^n,A\g^2 \phi^n)|&=&2 \dt |(A\g^{1/2}B_1(u^n, \phi^n),A\g^{3/2} \phi^n)\l| \leq 2 \dt |A\g^{1/2}B_1(u^n, \phi^n)|\l |A\g^{3/2} \phi^n|\l \no\\
 &\leq& c \dt \|u^n\|^{1/2} |A u^n|^{1/2}_{L^2} \|\phi^n\|^{1/2} |A\g \phi^n|_{L^2}^{1/2} |A\g^{3/2} \phi^n|_{L^2}+ c \dt |u^n|^{1/2}_{L^2} \|u^n\|^{1/2}|A\g \phi^n|_{L^2}^{1/2} |A\g^{3/2} \phi^n|_{L^2}^{3/2}\no\\
&&\qquad \qquad \qquad(\textrm{by Ladyzhenskaya's inequality}))\\
&\leq& \frac{\epso}{3} \dt |A\g^{3/2} \phi^n|^2_{L^2}+\frac{\nuo }{4} \dt |A u^n|^2_{L^2} +c \dt\|u^n\|^2 \|\phi^n\|^2 |A\g \phi^n|_{L^2}^2
+c \dt |u^n|^{2}_{L^2} \|u^n\|^{2}|A\g \phi^n|_{L^2}^{2}.\no
\end{eqnarray}

Recalling (\ref{q:bdv}), relations (\ref{2.75})--(\ref{2.80}) give
\begin{eqnarray}\label{2.81}
      \|u^n\|^2 + |A\g\ph|^2_{L^2}- (\|u^{n-1}\|^2+|A\g\phn|^2_{L^2})+ \|u^n - u^{n-1}\|^2 +|A\g(\ph-\phn)|^2_{L^2}\no\\
         +\nuo \dt |A u^n|^2_{L^2} +\epso \dt  |\Ag ^{3/2}\phi^n|^2_{L^2} \leq c K_1^2 \dt \|u^n\|^4 +c K_1^2 \dt  |\Ag \phi^n|_{L^2}^4\\
         +c K_1^2 \dt\|u^n\|^2|A\g \phi^n|_{L^2}^2+ c \dt | \fg'(\phi^n) \nabla \phi^n|^2_{L^2} +\frac{2}{\nuo} \dt|g^n|_{L^2}^2,\no
\end{eqnarray}
from which the conclusion of the lemma follows right away.
\end{proof}
%=========================================================================================

In order to prove the uniform boundedness of
$\|(u^n,\phi^n)\|_{\Vo}$ we will make use of the following two
lemmas, whose proofs can be found in \cite{shen:90}:

%==========================================================================================
\begin{Lem}\label{t:dgronwall}
Given $\dt>0$ and positive sequences $\x_n$, $\eta_n$ and $\zeta_n$ such
that
\begin{equation}\label{q:gronseq}
  \x_n \le \x_{n-1} (1 + \dt \eta_{n-1}) + \dt \zeta_n,
    \qquad\textrm{for}\> n \geq 1,
\end{equation}
we have, for any $n \geq 2$,
\begin{equation}\label{q:gronest}
  \x_n \le \left(\x_0 + \sum_{i=1}^{n} \dt \zeta_i \right) \exp\biggl( \sum_{i=0}^{n-1} \dt \eta_i \biggr).
\end{equation}
\end{Lem}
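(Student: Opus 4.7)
The plan is to prove Lemma \ref{t:dgronwall} by iterating the one-step recursive bound (\ref{q:gronseq}) to obtain a closed-form upper bound as a product, and then dominate the product by an exponential. Since the sequences $\xi_n$, $\eta_n$, $\zeta_n$ are all nonnegative, no sign issues arise, and the whole argument is a straightforward finite induction.

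First I would iterate (\ref{q:gronseq}) downward. Applying the hypothesis once gives $\xi_n \le \xi_{n-1}(1+\dt\eta_{n-1}) + \dt\zeta_n$; substituting the same inequality for $\xi_{n-1}$, then $\xi_{n-2}$, and so on (a straightforward induction on $n$), produces the explicit bound
\begin{equation*}
  \xi_n \le \xi_0 \prod_{i=0}^{n-1}(1+\dt\eta_i) + \sum_{j=1}^{n} \dt\zeta_j \prod_{i=j}^{n-1}(1+\dt\eta_i),
\end{equation*}
with the convention that the empty product (when $j=n$) equals $1$. At this stage everything is manipulation of nonnegative quantities, so no cancellations need to be tracked carefully.

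Next I would use the elementary inequality $1+x\le e^x$ valid for all $x\ge 0$, applied with $x=\dt\eta_i\ge 0$, to convert each product into an exponential:
\begin{equation*}
  \prod_{i=j}^{n-1}(1+\dt\eta_i) \le \exp\Bigl(\sum_{i=j}^{n-1}\dt\eta_i\Bigr) \le \exp\Bigl(\sum_{i=0}^{n-1}\dt\eta_i\Bigr),
\end{equation*}
where the second inequality uses $\eta_i\ge 0$ to extend the range of summation. Pulling this common exponential factor out of the sum in $j$ and combining with the $\xi_0$ term yields precisely (\ref{q:gronest}).

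There is no real obstacle here: the only subtle point is making sure the induction step lines up correctly with the index conventions in (\ref{q:gronest}), namely that the product of $n$ factors $(1+\dt\eta_i)$ running over $i=0,\dots,n-1$ corresponds to the stated exponential of $\sum_{i=0}^{n-1}\dt\eta_i$, and that the ``$\xi_0 + \sum_{i=1}^{n}\dt\zeta_i$'' grouping matches the iterated substitution. For completeness one could verify the base case $n=2$ directly from (\ref{q:gronseq}) and then conclude by induction, which is the cleanest way to write this up.
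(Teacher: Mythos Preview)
Your argument is correct and is exactly the standard proof of the discrete Gronwall lemma: iterate the recursion, bound the resulting products via $1+x\le e^x$, and use nonnegativity of the $\eta_i$ to replace each partial exponential sum by the full one. The paper itself does not supply a proof of this lemma but simply refers the reader to \cite{shen:90}, so there is no in-paper argument to compare against; your write-up is precisely the kind of proof one finds in that reference.
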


%===========================================================================================

\begin{Lem}\label{t:dugronwall}
Given $\dt>0$, a positive integer $ n_0$, positive sequences $\x_n$,
$\eta_n$ and $\zeta_n$ such that
\begin{equation}\label{q:gronseq2}
       \x_n \le \x_{n-1} (1 + \dt \eta_{n-1}) + \dt \zeta_n,
          \qquad\textrm{for } n \geq n_0,
\end{equation}
and given the bounds
\begin{eqnarray}\label{q:groncond}
  \sum_{n=k_0}^{N+k_0} \dt \eta_n \le a_1, \qquad\qquad
  \sum_{n=k_0}^{N+k_0} \dt \zeta_n \le a_2, \qquad\qquad
  \sum_{n=k_0}^{N+k_0} \dt \x_n \le a_3,
\end{eqnarray}
for any $k_0 \geq n_0$, we have,
\begin{equation}\label{q:ugronest1}
       \x_n \le \Bigl( \frac{a_3}{N\dt}
         + a_2 \Bigr)\, {\rm e}^{a_1},  \qquad \forall n \geq N+n_0.
\end{equation}
\end{Lem}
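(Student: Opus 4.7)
The plan is to derive this discrete uniform Gronwall estimate by applying the non-uniform discrete Gronwall lemma (Lemma \ref{t:dgronwall}) on every short interval of length at most $N$ ending at the fixed index $n$, and then averaging over the starting point to eliminate the initial value via the third summation hypothesis. This parallels the standard continuous uniform Gronwall argument, with integrals replaced by index sums.

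More concretely, I would fix $n \ge N+n_0$ and, for each integer $k$ with $n-N \le k \le n-1$ (note that $k \ge n_0$ because $n \ge N + n_0$), iterate the recursion (\ref{q:gronseq2}) from $k$ up to $n$. This is exactly the setting of Lemma \ref{t:dgronwall} applied to the shifted sequence $\x_{k+j}$, so I get
\begin{equation*}
  \x_n \le \left( \x_k + \sum_{i=k+1}^{n} \dt\, \zeta_i \right) \exp\!\left( \sum_{i=k}^{n-1} \dt\, \eta_i \right).
\end{equation*}
Since $k \ge n-N$ and therefore $n-1 \le k+N$ and $n \le k+N$, each of the two finite sums on the right is dominated by the corresponding hypothesis in (\ref{q:groncond}) with that choice of $k_0=k$, yielding
\begin{equation*}
  \x_n \le \left( \x_k + a_2 \right) e^{a_1}, \qquad k = n-N,\, n-N+1,\, \dots,\, n-1.
\end{equation*}

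Next I would sum these $N$ inequalities over $k$ from $n-N$ to $n-1$ to obtain
\begin{equation*}
  N\,\x_n \le e^{a_1} \sum_{k=n-N}^{n-1} \x_k + N a_2\, e^{a_1}.
\end{equation*}
To control the remaining sum I apply the third bound in (\ref{q:groncond}) with $k_0=n-N$ (which is $\ge n_0$), giving $\dt \sum_{k=n-N}^{n-1} \x_k \le \dt \sum_{k=n-N}^{n} \x_k \le a_3$, and consequently $\sum_{k=n-N}^{n-1} \x_k \le a_3/\dt$. Substituting this in and dividing by $N$ yields the desired estimate
\begin{equation*}
  \x_n \le \left( \frac{a_3}{N\dt} + a_2 \right) e^{a_1}.
\end{equation*}

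No deep difficulty is expected; the only thing to watch is the index bookkeeping: verifying that every $k$ in the chosen window lies in the range of validity of the recursion ($k \ge n_0$), and that the finite partial sums of $\eta_i$ and $\zeta_i$ truly fit inside windows of length $N$ so that the hypotheses in (\ref{q:groncond}) apply with $k_0 = k$. Once those off-by-one checks are done, the argument is a direct application of Lemma \ref{t:dgronwall} followed by an averaging step.
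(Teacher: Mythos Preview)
Your argument is correct and is precisely the standard proof of the discrete uniform Gronwall lemma: iterate the recursion over a window of length $N$ via Lemma~\ref{t:dgronwall}, then average over the starting index to trade the unknown initial value for the $a_3$ bound. The paper itself does not supply a proof of this lemma but simply refers the reader to \cite{shen:90}; your write-up matches the argument found there, so there is nothing further to compare.
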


%===========================================================================================
We are now able to prove the following:

\begin{Prop}\label{t:bdh1}
Let $(u_0, \phi_0) \in \Vo$ and $(u^n, \phi^n)$ be the solution of
the numerical scheme (\ref{q-ies}). Also, let $\dt$ be such that
\begin{eqnarray}\label{t1}
      \dt  &\leq \min \left\{\frac{1}{\kappa }, 1 \right\}=:\kappa_1.
\end{eqnarray}
Then there exists $K_3\bigl( \|(u_{0},\phi_{0})\|_{\Vo},
\gvi\bigr)$, such that
\beq\label{2.108}
   \Arrowvert (u^n,\phi^n)\Arrowvert_{\Vo} \le K_3\bigl( \|(u_{0},\phi_{0})\|_{\Vo}, \gvi \bigr),
      \, \forall\, n \geq 0,
\eeq and for all $i=1, \cdots, m$, we have
\begin{eqnarray}\label{1.59}
\sum_{n=i}^{m} \left(\|{u}^n - {u}^{n-1}\|^2 + |A\g({\phi}^n -
{\phi}^{n-1})|^2_{L^2}\right) \leq
&K_3^2+c K_1^2 K_3^4 (m-i+1)\dt +\frac{2}{\nuo} \|g\|_{\infty}^2(m-i+1)\dt \no\\
& +  c \left(c_f^2 K_1^{2m} K_3^2 +  2( 2 c_f^2 +\alpha^{-2} \nu_2^2
\gamma^2)  K_1^2 \right) (m-i+1)\dt.
\end{eqnarray}

Moreover, there exists $K_4=K_4(\gvi)$,  independent of the
initial data, such that
\begin{equation}\label{1.86}
\Arrowvert (u^n,\phi^n)\Arrowvert_{\Vo} \leq K_4(\gvi), \, \forall n
\geq 2N_0+1,
\eeq
where $N_0:=\lfloor T_0/\dt\rfloor$, with $T_0$ being the time of entering an absorbing ball for %$({u}^n, \phi^n)$ in $\Yo$.
$\|(u^n,\phi^n) \|_{\Yo}$.
\end{Prop}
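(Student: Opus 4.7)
The plan is to combine Lemmas~\ref{l2}--\ref{l3} with the $\Yo$-summation estimates (\ref{M1})--(\ref{M2}) from Theorem~\ref{t:bdh}, and then to invoke the discrete Gronwall lemma (Lemma~\ref{t:dgronwall}) for the initial-data-dependent bound $K_3$ and the discrete uniform Gronwall lemma (Lemma~\ref{t:dugronwall}) for the asymptotic bound $K_4$. The transition between the two regimes occurs once the orbit has entered the $\Yo$-absorbing ball supplied by Corollary~\ref{C1}, i.e. after step $N_0=\lfloor T_0/\dt\rfloor$.

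Set $\x_n:=\|(u^n,\phi^n)\|_{\Vo}^2$ and $\zeta_n:=|\fg'(\phi^n)\nabla\phi^n|_{L^2}^2+|g^n|_{L^2}^2$. Lemma~\ref{l3} then reads
\[
\x_n-\x_{n-1}\le c_4 K_1^2\,\dt\,\x_n^2+c_5\,\dt\,\zeta_n.
\]
I would factor the quadratic term as $\x_n^2=\x_n\cdot\x_n$ and use the one-step control furnished by Lemma~\ref{l2}, together with the smallness of $\dt$, to keep $c_4 K_1^2\dt\,\x_n\le 1/2$; dividing through and Taylor expanding recovers a form suitable for discrete Gronwall, namely
\[
\x_n\le \x_{n-1}(1+\dt\,\eta_{n-1})+\dt\,\tilde\zeta_n,\qquad \eta_{n-1}\le c\,K_1^2\,\x_n,\quad \tilde\zeta_n\le c\,\zeta_n.
\]
The summability $\sum_{j}\dt\,\eta_{j-1}\le c K_1^2(M_1+M_2)$ on any fixed-length window is a direct consequence of (\ref{M1})--(\ref{M2}), while $\sum \dt\,\tilde\zeta_n$ is controlled via the growth assumption (\ref{s1}) on $f$, the $\Yo$-bound $\|\phi^n\|_{H^1}\le K_1$, and the 2D Sobolev embeddings $H^1\hookrightarrow L^p$ for any $p<\infty$ and $H^{3/2}\hookrightarrow W^{1,4}$.

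For $0\le n\le 2N_0+1$, Lemma~\ref{t:dgronwall} yields
\[
\x_n\le \Bigl(\x_0+\sum_{i=1}^n \dt\,\tilde\zeta_i\Bigr)\exp\Bigl(\sum_{i=0}^{n-1}\dt\,\eta_i\Bigr),
\]
with both sums bounded in terms of $\|(u_0,\phi_0)\|_{\Yo}$, $\gvi$ and $T_0$; this produces a bound depending only on $\|(u_0,\phi_0)\|_{\Vo}$ and $\gvi$. For $n\ge 2N_0+1$ the orbit lies inside $B_{\Yo}(0,\sqrt 2\,\rho_0)$, so the constants $a_1,a_2,a_3$ entering Lemma~\ref{t:dugronwall} depend only on $\rho_0$ and $\gvi$, and the conclusion $\x_n\le(a_3/(Nk)+a_2)\mathrm{e}^{a_1}$ defines $K_4(\gvi)$ and proves (\ref{1.86}). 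Setting $K_3:=\max\{\text{finite-window bound},K_4\}$ yields (\ref{2.108}).

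Finally, (\ref{1.59}) is read off from inequality (\ref{2.81}) in the proof of Lemma~\ref{l3}: summing $\x_n-\x_{n-1}$ telescopically from $n=i$ to $m$ isolates $\sum(\|u^n-u^{n-1}\|^2+|A\g(\phi^n-\phi^{n-1})|_{L^2}^2)$ on the left, and the pointwise bound $\x_n\le K_3^2$ converts $\|(u^n,\phi^n)\|_{\Vo}^4$ into $K_3^2\,\x_n$ on the right, after which (\ref{M1})--(\ref{M2}) together with the growth (\ref{s2}) on $\fg$ close the estimate. The main technical hurdle is the quartic term $\x_n^2$ in Lemma~\ref{l3}, which alone would cause exponential blow-up; it is precisely the $\Yo$-integrability $\sum \dt\,\x_n\le M_1+M_2$ from Theorem~\ref{t:bdh} that keeps the Gronwall exponent finite on fixed windows, and the $\Yo$-absorbing property that renders it uniformly bounded for $n\ge 2N_0+1$.
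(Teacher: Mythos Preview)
Your overall architecture is exactly the paper's: combine Lemma~\ref{l2} and Lemma~\ref{l3}, cast the result as $\x_n\le\x_{n-1}(1+\dt\,\eta_{n-1})+\dt\,\zeta_n$, apply Lemma~\ref{t:dgronwall} on $[0,2N_0]$ and Lemma~\ref{t:dugronwall} thereafter (using the $\Yo$-absorbing ball), and read off (\ref{1.59}) by summing (\ref{2.81}). The use of (\ref{M1})--(\ref{M2}) to control $\sum\dt\,\eta_i$ and $\sum\dt\,\x_i$ is also what the paper does.

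The one genuine gap is your treatment of the quadratic term $c_4K_1^2\dt\,\x_n^2$ from Lemma~\ref{l3}. You propose to move one factor of $\x_n$ to the left under the condition $c_4K_1^2\dt\,\x_n\le 1/2$ and then ``Taylor expand''. But this condition cannot be verified under the stated hypothesis $\dt\le\kappa_1$: even with Lemma~\ref{l2} you only get $\x_n\le K_2\x_{n-1}+C'$, so you would need $\dt\le \bigl(2c_4K_1^2(K_2\x_{n-1}+C')\bigr)^{-1}$, a restriction depending on the very $\Vo$-bound you are trying to establish. This is circular, and in any case $\kappa_1=\min\{1/\kappa,1\}$ is independent of $\|(u_0,\phi_0)\|_{\Vo}$.

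The paper avoids this entirely by using Lemma~\ref{l2} as a \emph{substitution}, not an absorption: insert (\ref{1.71}) directly into $\x_n^2$ in (\ref{1.83}) to obtain
\[
\x_n\le \x_{n-1}+c_4K_1^2\dt\bigl[K_2\x_{n-1}+c_3(|f_\gamma(\phi^n)|_{L^2}^2+|g^n|_{L^2}^2)\bigr]^2+c_5\dt\,\zeta_n,
\]
expand $(a+b)^2\le 2a^2+2b^2$, and arrive at
\[
\x_n\le \x_{n-1}\bigl(1+2c_4K_1^2K_2^2\dt\,\x_{n-1}\bigr)+\dt\,\tilde\zeta_n,
\]
with $\eta_{n-1}=2c_4K_1^2K_2^2\,\x_{n-1}$ and $\tilde\zeta_n$ now also containing $2c_3^2c_4K_1^2(|f_\gamma(\phi^n)|_{L^2}^2+|g^n|_{L^2}^2)^2$. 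This is already in Gronwall form, no smallness beyond $\dt\le\kappa_1$ is needed, and the rest of your argument goes through verbatim. Note that this substitution is precisely the purpose of Lemma~\ref{l2}; it is not used anywhere else in the proof.
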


\begin{proof}Using (\ref{1.71}), we infer from (\ref{1.83})
\begin{eqnarray}\label{2.83}
 \Arrowvert (u^n,\phi^n)\Arrowvert^2_{\Vo}  \leq  c_4 K_1^2 {\dt}\left[K_2 \|(u^{n-1},\phi^{n-1})\|^2_{\Vo}  + c_3(|f\g(\phi^n)|^2_{L^2}+|g^n|^2_{L^2})\right]^2
+\Arrowvert (u^{n-1},\phi^{n-1}) \Arrowvert^2_{\Vo} \no\\
+ c_5 \dt \left(| \fg'(\phi^n) \nabla \phi^n|^2_{L^2}
+|g^n|_{L^2}^2\right)
\Arrowvert (u^{n-1},\phi^{n-1}) \Arrowvert^2_{\Vo}\left[ 1+ 2c_4 K_1^2 K_2^2{\dt}\|(u^{n-1},\phi^{n-1})\|^2_{\Vo} \right] \no\\
 \quad + 2c_3^2 c_4 K_1^2 {\dt} (|f\g(\phi^n)|^2_{L^2}+|g^n|^2_{L^2})^2+c_5 \dt \left(| \fg'(\phi^n) \nabla \phi^n|^2_{L^2}+|g^n|_{L^2}^2\right),
\end{eqnarray}
which we rewrite in the form
\begin{equation}\label{2.107}
\x_n \le \x_{n-1} (1+\dt \eta_{n-1}) + \dt \zeta_n,
\end{equation}
 with
 \begin{eqnarray}
 \x_n=\Arrowvert (u^n,\phi^n)\Arrowvert^2_{\Vo}, \qquad \qquad \qquad  \eta_n=2c_4 K_1^2 K_2^2 \|(u^{n},\phi^{n})\|^2_{\Vo},\\
 \zeta_n=2c_3^2 c_4 K_1^2
(|f\g(\phi^n)|^2_{L^2}+|g^n|^2_{L^2})^2+c_5 \left(| \fg'(\phi^n)
\nabla \phi^n|^2_{L^2}+|g^n|_{L^2}^2\right). \no
 \end{eqnarray}
Using (\ref{fg}), (\ref{s1}), (\ref{s2}) and recalling
(\ref{q:bdv}) and the Sobolev imbedding $H^1(\Omega) \hookrightarrow
L^{q}(\Omega)$, for any $1\leq q \leq \infty$, we obtain
\begin{eqnarray}\label{2.84}
| f\g(\phi^n)|^2_{L^2}
\leq c c_f^2  \left(|\Omega|+K_1 ^{2(m+1)} \right)  + 2\alpha^{-2}\nu_2^2 \gamma^2 K_1^2,
\end{eqnarray}
\begin{eqnarray}\label{2.85}
| \fg'(\phi^n) \nabla \phi^n|^2_{L^2}
\leq c c_f^2 K_1 ^{2m} |A\g \phi^n|^2_{L^{2}} +  2( 2 c_f^2
+\alpha^{-2} \nu_2^2 \gamma^2)  K_1^2.
\end{eqnarray}
Relations (\ref{2.84}), (\ref{2.85}), together with  (\ref{M1}), (\ref{M2}),
and conclusion (\ref{q:gronest}) of
Lemma \ref{t:dgronwall} give
\begin{equation}\label{2.86}
   \x_n=\Arrowvert (u^n,\phi^n)\Arrowvert^2_{\Vo} \leq K_5^2(\|(u_{0},\phi_{0})\|_{\Vo},\gvi, 2N_0k
   \bigr), \quad \forall n=1, \cdots, 2N_0,
\end{equation}
for some continuous function $K_5(\cdot,\cdot, \cdot \bigr)$, increasing in all its arguments.

In order to derive an upper bound on
$\Arrowvert(u^n,\phi^n)\Arrowvert_{\Vo}$, $n \geq 2N_0$, we apply
Lemma \ref{t:dugronwall} to (\ref{2.107}). In order to do so, we
recall that $\|(u^n,\phi^n)\|_{\Yo}<\rho_0$, for $n\geq N_0$,
\begin{equation}\label{q:ugronest}
       \x_n=\Arrowvert (u^n,\phi^n)\Arrowvert^2_{\Vo} \le \Bigl( \frac{a_3}{T_0}
         + a_2 \Bigr)\, {\rm e}^{a_1}=:K_4^2(\gvi),  \qquad \forall n \geq
         2N_0+1,
\end{equation}
which is exactly (\ref{1.86}). Combining (\ref{q:ugronest}) with
(\ref{2.86}), we obtain conclusion (\ref{2.108}).

Taking the sum of (\ref{2.81}) with $n$ from $i$ to $m$ and using
(\ref{2.108}) and (\ref{2.85}) gives conclusion (\ref{1.59}) and
thus the proof of Proposition \ref{l3} is complete.
\end{proof}

%==============================================================================

\section{Convergence of Attractors}\label{s5}

In this section we address the issue of the convergence of the
attractors generated by the discrete system (\ref{q-ies}) to the
attractor generated by the continuous system (\ref{y6}). Whereas for
the continuous system (\ref{y6}) one can prove both the existence
and uniqueness of the solution (see \cite{gra1})---and, therefore,
define a global attractor---, for the discrete system (\ref{q-ies})
one can prove (using Proposition \ref{t:bdh}) the uniqueness of the
solution provided that $k\leq \kappa(\|(u_0, \phi_0)\|_{\Vo})$, for
some $ \kappa(\|(u_0, \phi_0)\|_{\Vo})>0$. Since the time
restriction depends on the initial data, one cannot define a
single-valued attractor in the classical sense, and this is why we
need to use the attractor theory for the so-called multi-valued
mappings. Multi-valued dynamical systems have been investigated by
many authors (see, e.g., \cite{Ball}, \cite{Barbu}, \cite{CLM},
\cite{MV}, \cite{RSS}, \cite{SSS}), but in this article we use the
tools developed in \cite{CZT} (see also, \cite{ET}) to study the
convergence of the discrete (multi-valued) attractors to the
continuous (single-valued) attractor.  For convenience, we recall
those results in Subsection \ref{ssec:abs}, and  then we apply them
to the two-phase flow model in Subsection \ref{ss:5.2}.

%===========================================================================================

\subsection{Attractors for multi-valued mappings}\label{ssec:abs}
Throughout this subsection, we consider $(H, |\cdot|)$ to be a
Hilbert space and $\T$ to be either $\R^+=[0,\infty)$ or $\N$.

%%===========================================================================================
\begin{Def}
A one-parameter family of set-valued maps $S(t):2^H\to 2^H$ is a
\textbf{multi-valued semigroup} (m-semigroup) if it satisfies the
following properties:

\begin{enumerate}
    \item[(S.1)] $S(0)=I_{2^H}$ (identity in $2^H$); \label{S.1}
    \item[(S.2)] $S(t+s)=S(t)S(s)$, for all $t,s \in \T$. \label{S.2}
\end{enumerate}

Moreover, the m-semigroup is said to be \textbf{closed} if
      $S(t)$ is a closed map for every $t\in\T$, meaning that if $x_n\to x$ in $H$ and
    $y_n\in S(t)x_n$ is such that $y_n\to y$ in $H$, then $y\in S(t)x$.
\end{Def}

%%===========================================================================================

\begin{Def}
The \textbf{positive orbit} of $\B$, starting at $t\in\T$, is the
set
$$
\gamma_t(\B)=\bigcup_{\tau\geq t}S(\tau)\B,
$$
where
$$
S(t)\B=\bigcup_{x\in\B}S(t)x.
$$
\end{Def}

%%%===========================================================================================

\begin{Def}
For any $\B\in2^H$, the set
$$
\omega(\B)=\bigcap_{t\in\T}\overline{\gamma_t(\B)}
$$
is called the \textbf{$\omega$-limit set} of $\B$.
\end{Def}

%%%===========================================================================================

\begin{Def}
A nonempty set $\B\in 2^H$ is \textbf{invariant} for $S(t)$ if
$$
S(t)\B=\B, \qquad \forall t\in\T.
$$
\end{Def}

%%===========================================================================================

\begin{Def}
A set $\B_0\in 2^H$ is an \textbf{absorbing set} for the m-semigroup
$S(t)$ if for every bounded set $\B\in 2^H$ there exists $t_\B\in
\T$ such that
$$
S(t)\B\subset \B_0, \qquad \forall t\geq t_\B.
$$
\end{Def}

%==============================================================================

\begin{Def}
A nonempty set $\C\in 2^H$ is \textbf{attracting} if for every
bounded set $\B$ we have
$$
\lim_{t\to \infty}\dist(S(t)\B,\C)=0,
$$
where $\dist(\cdot,\cdot)$ is the \textbf{ Hausdorff semidistance},
defined as \beq\label{5.1a}
\dist(\B,\C)=\sup_{b\in\B}\inf_{c\in\C}|b-c|, \forall \, \B, \C
\subset  H. \eeq
\end{Def}

%%===========================================================================================

\begin{Def}
A nonempty compact set $\A\in 2^X$ is said to be the \textbf{global
attractor} of $S(t)$ if $\A$ is an invariant attracting set.
\end{Def}

%===========================================================================================

\begin{Def}
Given a bounded set $\B\in 2^H$, the \textbf{Kuratowski measure of
noncompactness} $\alpha(\B)$ of $\B$ is defined as
$$
\alpha(\B)=\inf\big\{\delta\, :\, \B\ \textrm{has a finite cover by
balls of } X \textrm{ of diameter less than } \delta\big\}.
$$
\end{Def}

%===========================================================================================

%\vskip .5cm

The following theorem, whose proof can be found in \cite{CZT}, gives
conditions under which a global attractor exists.
\begin{Thm}\label{teo:attr}
Suppose that the closed m-semigroup $S(t)$ possesses a bounded
absorbing set $\B_0\in 2^H$ and
\begin{equation}\label{eq:asymptcpt}
\lim_{t\to\infty}\alpha(S(t)\B_0)=0.
\end{equation}
Then $\omega(\B_0)$ is the global attractor of $S(t)$.
\end{Thm}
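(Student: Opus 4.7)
The plan is to verify that $\omega(\B_0)$ satisfies each of the three defining properties of a global attractor in turn: (i) nonempty compactness, (ii) attraction of every bounded set, and (iii) invariance under $S(t)$. The essential tools are the absorbing property of $\B_0$, the asymptotic compactness condition $\alpha(S(t)\B_0)\to 0$, and the closedness of the $m$-semigroup. The argument parallels the classical Ladyzhenskaya-type proof for single-valued semigroups, but the multi-valuedness of $S(t)$ creates a genuine subtlety in the invariance step, which I expect to be the main obstacle.

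A useful preliminary step is to upgrade the asymptotic compactness hypothesis. Since $\B_0$ is absorbing, $S(s)\B_0 \subset \B_0$ for $s \geq t_{\B_0}$; together with the semigroup identity (S.2), this gives $S(\tau)\B_0 = S(t)\bigl(S(\tau-t)\B_0\bigr) \subset S(t)\B_0$ whenever $\tau - t \geq t_{\B_0}$, so that $\gamma_{t+t_{\B_0}}(\B_0)\subset S(t)\B_0$ and consequently $\alpha(\gamma_t(\B_0))\to 0$. The family $\{\overline{\gamma_t(\B_0)}\}_{t\in\T}$ is then a nested, decreasing family of nonempty closed bounded subsets of $H$ whose Kuratowski measures tend to zero, and the standard Cantor-type theorem in a complete metric space yields that $\omega(\B_0)$ is nonempty and compact.

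For (ii), I would argue by contradiction: if $\dist(S(t)\B_0,\omega(\B_0))$ does not tend to zero, extract sequences $t_n\to\infty$, $x_n\in\B_0$, and $y_n\in S(t_n)x_n$ with $\dist(y_n,\omega(\B_0))\geq\eps$. The tail $\{y_n\}_{n\geq N}$ has arbitrarily small Kuratowski measure by the preliminary observation, so a subsequence $y_{n_k}\to y$ exists; for every fixed $s$ the tail ultimately lies in $\gamma_s(\B_0)$, so $y\in\overline{\gamma_s(\B_0)}$ for all $s$ and hence $y\in\omega(\B_0)$, contradicting $\dist(y,\omega(\B_0))\geq\eps$. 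Attraction of an arbitrary bounded $\B$ then follows from (S.2) and the absorbing property: for $t\geq t_\B$, $S(t+s)\B = S(s)\bigl(S(t)\B\bigr)\subset S(s)\B_0$, so $\dist(S(t+s)\B,\omega(\B_0))\leq \dist(S(s)\B_0,\omega(\B_0))\to 0$.

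For (iii), the inclusion $\omega(\B_0)\subset S(t)\omega(\B_0)$ goes through cleanly: given $y=\lim_n y_n$ with $y_n\in S(\tau_n)x_n$ and $\tau_n\to\infty$, use (S.2) to write $y_n\in S(t)w_n$ for some $w_n\in S(\tau_n-t)x_n$; asymptotic compactness extracts a convergent subsequence $w_{n_k}\to w\in\omega(\B_0)$, and the closedness of $S(t)$ forces $y\in S(t)w\subset S(t)\omega(\B_0)$. The reverse inclusion $S(t)\omega(\B_0)\subset\omega(\B_0)$ is the hard part: the closed-map hypothesis provides only upper semi-continuity of $S(t)$, whereas a direct attempt to express an arbitrary $z\in S(t)y$ as a limit of elements of $\gamma_s(\B_0)$ would naively require lower semi-continuity. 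I would handle this step by following the construction in \cite{CZT}, where the combination of the already-established attraction property, the closedness of both $S(t)$ and $\omega(\B_0)$, and a careful composition along the semigroup identifies any such $z$ as an accumulation point of positive orbits issuing from $\B_0$, placing it in $\omega(\B_0)$ as required.
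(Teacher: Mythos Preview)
The paper does not supply its own proof of this theorem: it simply states that ``the proof can be found in \cite{CZT}'' and moves on. Your proposal therefore already contains substantially more argument than the paper does, and the outline you give---nonempty compactness via the Kuratowski--Cantor intersection principle, attraction by a diagonal/contradiction argument, and invariance split into the two inclusions with the closedness hypothesis doing the work---is exactly the standard route and is essentially what appears in \cite{CZT}. In that sense your approach and the paper's (deferred) approach coincide.

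One small remark on the step you flag as the obstacle. You are right that the inclusion $S(t)\omega(\B_0)\subset\omega(\B_0)$ cannot be obtained directly from the closed-map property, but it does not actually require any delicate construction or lower semicontinuity. Once you have already established the reverse inclusion $\omega(\B_0)\subset S(s)\omega(\B_0)$ for every $s$, any $z\in S(t)y$ with $y\in\omega(\B_0)$ can be pushed backward: for each $s$ there is $w_s\in\omega(\B_0)$ with $y\in S(s)w_s$, hence $z\in S(t+s)w_s$. Since $\omega(\B_0)$ is bounded and therefore absorbed by $\B_0$ after some time $t_1$, for $s\geq t_1$ one can factor $z\in S(t+s-t_1)\bigl(S(t_1)w_s\bigr)\subset S(t+s-t_1)\B_0$, placing $z$ in $\gamma_\tau(\B_0)$ for arbitrarily large $\tau$ and hence in $\omega(\B_0)$. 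So the ``hard'' direction follows from the ``easy'' one together with absorption, and you need not invoke \cite{CZT} as a black box for it.
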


%===========================================================================================

For the purpose of this article, we need to introduce the notion of
\textit{discrete m-semigroups}. More precisely, we have the
following:
\begin{Def}
Given a set-valued map $S:2^H\to 2^H$, we define a \textbf{discrete
m-semigroup} by
$$
S(n)=S^n, \qquad \forall n\in\N,
$$
and we will denote it by $\{S\}_{n\in\N}$ (instead of
$\{S^n\}_{n\in\N}$).
\end{Def}

%===========================================================================================

\begin{Rem}
Given two nonempty sets $\B,\C\in 2^H$, we write
$$
\B-\C=\{b-c:\, b\in\B,\, c\in\C\} \qquad \textrm{and} \qquad
|\B|=\sup_{b\in\B} |b|.
$$
\end{Rem}

%===========================================================================================

In order to prove the convergence of the attractors generated by the
discrete system (\ref{q-ies}) to the attractor generated by the
continuous system (\ref{y6}) we will use the following result, whose
proof can be found in \cite{CZT}  (see also \cite{XW},
\cite{TW2011}, \cite{ET}).
\begin{Thm}\label{teo:approx}
Let $S(t)$ be a closed m-semigroup, possessing the global attractor
$\A$, and for $\kappa_0>0$, let $\{S_k,\, 0<k\leq
\kappa_0\}_{n\in\N}$ be a family of discrete closed m-semigroups,
with global attractor $\A_k$. Assume the following:
\begin{enumerate}
    \item[(H1)]  [Uniform boundedness]:  there exists $\kappa_1\in (0,\kappa_0]$ such that the set
    $$
    \K=\bigcup_{k\in (0,\kappa_1]}\A_k
    $$
    is bounded in $H$;  \label{H1}
    \item[(H2)] [Finite time uniform convergence]: there exists $t_0\geq 0$ such that for any $T^\star>t_0$,
    $$
    \lim_{k\to 0}\sup_{x\in\A_k,\,nk\in [t_0,T^\star]}| S_k^nx-S(nk)x|=0.
    $$
    \label{H2}
\end{enumerate}
Then
$$
\lim_{k\to 0}\dist (\A_k,\A)=0,
$$
where $\dist$ denotes the Hausdorff semidistance defined in
(\ref{5.1a}).
\end{Thm}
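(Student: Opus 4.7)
The plan is to argue by contradiction, leveraging the invariance of each discrete attractor $\A_k$ together with the attracting property of the continuous attractor $\A$, and then comparing discrete and continuous orbits through hypothesis (H2) on a bounded time interval.

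Specifically, suppose for contradiction that $\dist(\A_k,\A)\not\to 0$ as $k\to 0$. Then there exist $\varepsilon_0>0$, a sequence $k_j\searrow 0$ in $(0,\kappa_1]$, and points $a_j\in\A_{k_j}$ with $\dist(a_j,\A)\geq \varepsilon_0$ for every $j$. By (H1), $\K=\bigcup_{k\in(0,\kappa_1]}\A_k$ is bounded in $H$, so in particular $\{a_j\}\subset\K$. Because $\A$ is the global attractor of the closed m-semigroup $S(t)$, it attracts the bounded set $\K$, so there exists $T^{\star}>t_0$ such that
\[
\dist(S(t)\K,\A)\leq \frac{\varepsilon_0}{2},\qquad \forall\, t\geq T^{\star}.
\]

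Next, use the invariance $\A_{k_j}=S_{k_j}^{n}\A_{k_j}$, valid for every $n\in\mathbb{N}$, to trace each $a_j$ backward along an orbit: for each $j$ choose $n_j\in\mathbb{N}$ so that $n_jk_j\in[T^{\star},T^{\star}+1]$ (possible once $k_j\leq 1$). By invariance there exists $b_j\in\A_{k_j}\subset\K$ with $a_j\in S_{k_j}^{n_j}b_j$. Since $n_jk_j$ remains in the compact interval $[T^{\star},T^{\star}+1]$ and $b_j\in\A_{k_j}$, hypothesis (H2) supplies a matching $c_j\in S(n_jk_j)b_j$ with $|a_j-c_j|\leq \eta_j$, where $\eta_j$ is the (H2) supremum for $k=k_j$ over the compact time window $[t_0,T^{\star}+1]$, and $\eta_j\to 0$ as $j\to\infty$. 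Since $b_j\in\K$ and $n_jk_j\geq T^{\star}$, we have $c_j\in S(n_jk_j)\K$ and therefore $\dist(c_j,\A)\leq \varepsilon_0/2$. Consequently,
\[
\dist(a_j,\A)\leq |a_j-c_j|+\dist(c_j,\A)\leq \eta_j+\frac{\varepsilon_0}{2},
\]
which is strictly less than $\varepsilon_0$ for $j$ large enough, contradicting $\dist(a_j,\A)\geq\varepsilon_0$.

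The delicate point is the handling of the multi-valued structure: given $a_j\in\A_{k_j}=S_{k_j}^{n_j}\A_{k_j}$, one must extract a preimage $b_j\in\A_{k_j}$ \emph{and} pair $a_j$ with a continuous image $c_j\in S(n_jk_j)b_j$ for which $|a_j-c_j|$ is controlled by (H2). This requires reading (H2) as uniform control over compatible selections (equivalently, as a Hausdorff-semidistance bound from $S_{k_j}^{n_j}b_j$ into $S(n_jk_j)b_j$), so that every element of the discrete image can be matched with a nearby element of the continuous image. Once that interpretation is secured, the remaining ingredients---uniform boundedness of $\K$ from (H1), the attracting property of $\A$, and the invariance of $\A_{k_j}$---combine directly, in the order above, to close the contradiction.
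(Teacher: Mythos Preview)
The paper does not actually prove this theorem; it states the result and refers to \cite{CZT} (and \cite{XW}, \cite{TW2011}, \cite{ET}) for the proof. Your contradiction argument---pulling $a_j\in\A_{k_j}$ back via invariance to a preimage $b_j\in\A_{k_j}$, choosing $n_j$ with $n_jk_j$ in a fixed window above the attraction time for the bounded set $\K$, and then comparing $a_j$ with $c_j\in S(n_jk_j)b_j$ through (H2)---is precisely the classical upper-semicontinuity argument that those references carry out, so your approach coincides with what the paper intends.

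One small remark on your closing caveat: with the paper's convention $|\B-\C|=\sup_{b\in\B,\,c\in\C}|b-c|$ (see the Remark preceding the theorem), hypothesis (H2) is in fact \emph{stronger} than a Hausdorff-semidistance bound, so the matching of $a_j$ with some $c_j\in S(n_jk_j)b_j$ is immediate and no special ``compatible selection'' reading is needed---any $c_j$ in the continuous image works. In the paper's application the continuous semigroup $S(t)$ is single-valued anyway, so the issue does not arise there.
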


%==================================================================================

\subsection{Application: The two-phase flow model}\label{ss:5.2}
Here we will prove that there exists
$\kappa_1>0$ such that if $0<k\leq \kappa_1$,  the system
(\ref{q-ies}) generates a closed discrete m-semigroup
$\{S_k\}_{n\in\N}$, with global attractors $\A_k$, that will
converge to $\A$ in the sense of Theorem \ref{teo:approx}.

In order to do that,  we define, for $\dt>0$, the multi-valued map
$S_k:2^\Yo\to 2^\Yo$ as follows: for every $\tilde{v}=(\tilde{u},
\tilde{\phi}) \in \Yo $,
$$
S_k \tilde{v}=\{v=(u,\phi) \in \Vo:\, v \textrm{ solves
(\ref{4.111}) below with time-step } k\}:
$$
\begin{equation}\label{4.111}
\left \{
\begin{array}{ll}
u + \nuo \dt A u + \dt B_0(u,u) - \K \dt R_0( \epso \Ag
\phi,\phi) = \tilde{u}+ \dt g,  \\
\mu = \epso \Ag \phi + \alpha \fg(\phi), \\
\phi + \dt \mu + \dt B_1(u, \phi) = \tilde{\phi}.
\end{array} \right.
\end{equation}

%===========================================================================================

Using the same ideas as in \cite{CZT} (see also \cite{ET}), one can
prove the following:
\begin{Thm}
The multi-valued map $S_k$ associated with the implicit Euler scheme
(\ref{q-ies}) generates a closed discrete m-semigroup
$\{S_k\}_{n\in\N}$.
\end{Thm}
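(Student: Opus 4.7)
The plan is to verify that $S_k \colon 2^\Yo \to 2^\Yo$ is (i) well-defined with $S_k \tilde v \neq \emptyset$ for every $\tilde v \in \Yo$, and (ii) closed. Once these two properties are in place, the discrete m-semigroup identities $S_k(0)=I_{2^\Yo}$ and $S_k(n+m) = S_k(n) S_k(m)$ follow automatically from the convention $S_k(n):=S_k^n$, so no further axioms need to be checked.

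For existence, I would use a Galerkin approximation of the stationary system (\ref{4.111}). Let $\{w_i\}_{i\ge 1}$ be the eigenfunctions of $A$ on $V$ and $\{\psi_i\}_{i\ge 1}$ those of $\Ag$ on $D(\Ag)$, and let $P_N$, $Q_N$ be the corresponding $L^2$-orthogonal projections. The projected version of (\ref{4.111}) is a coercive nonlinear system on a finite-dimensional space: the a priori computations in the proof of Theorem \ref{t:bdh} carry over to the Galerkin subspace verbatim and yield $\Vo$-bounds depending only on $\|\tilde v\|_\Yo$ and $\|g\|_\infty$. Brouwer's fixed point theorem then produces a Galerkin solution $(u^N,\phi^N)$, and passage to the limit $N\to\infty$ using the compact embedding $\Vo \hookrightarrow \Yo$ (compact in 2D because $V \hookrightarrow H$ and $D(\Ag) \hookrightarrow H^1$ are both compact) produces a solution of (\ref{4.111}) belonging to $\Vo$.

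For closedness, assume $\tilde v_j \to \tilde v$ in $\Yo$ and $v_j = (u_j,\phi_j) \in S_k \tilde v_j$ with $v_j \to v$ in $\Yo$. Lemma \ref{l2} applied to a single step gives a $\Vo$-bound on $\{v_j\}$ in terms of $\|\tilde v_j\|_\Yo$ and $\|g\|_\infty$, so a subsequence satisfies $v_j \rightharpoonup v$ in $\Vo$. The linear operators $A$ and $\Ag$ pass to the limit by weak continuity, the convective terms $B_0(u_j,u_j)$ and $B_1(u_j,\phi_j)$ pass to the limit by the Rellich compactness $V \hookrightarrow L^4$ and $D(\Ag) \hookrightarrow W^{1,4}$ in 2D, and $\fg(\phi_j) \to \fg(\phi)$ strongly in $L^2$ by the polynomial growth (\ref{s2}) combined with strong convergence of $\phi_j$ in $L^q$ for every $q<\infty$.

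The main obstacle is the Korteweg-type coupling $R_0(\epso \Ag \phi_j,\phi_j) = \P(\epso \Ag \phi_j\,\nabla \phi_j)$, in which the only weakly convergent factor $\Ag \phi_j \rightharpoonup \Ag \phi$ in $L^2$ is paired with $\nabla \phi_j$. Here I would exploit the fact that $\nabla \phi_j \to \nabla \phi$ strongly in $L^4(\Omega)$ (from the compact embedding $D(\Ag) \hookrightarrow W^{1,4}(\Omega)$ in 2D), so the product converges weakly in $L^{4/3}(\Omega) \hookrightarrow V^*$, which suffices to identify the limit equation in $V^*$ and conclude that $v \in S_k \tilde v$. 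This is precisely the term whose control already distinguished the a priori analysis above from the pure Navier--Stokes setting of \cite{TW,CZT}, and it is the one requiring the most careful treatment in the passage to the limit.
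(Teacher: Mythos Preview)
Your outline is correct in spirit and follows the approach that the paper itself defers to (the paper gives no proof here, only the sentence ``Using the same ideas as in \cite{CZT} (see also \cite{ET}), one can prove the following''). The Galerkin/Brouwer argument for nonemptiness and the weak-compactness argument for closedness are the expected ingredients, and your handling of the Korteweg coupling $R_0(\epso \Ag\phi_j,\phi_j)$ via strong $W^{1,4}$ convergence of $\nabla\phi_j$ paired with weak $L^2$ convergence of $\Ag\phi_j$ is the right way to pass to the limit in that term.

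One correction is needed in the closedness step. You invoke Lemma~\ref{l2} to obtain a $\Vo$-bound on $v_j$ from $\Yo$-data on $\tilde v_j$, but (\ref{1.71}) bounds $\|(u^n,\phi^n)\|_\Vo^2$ only in terms of $\|(u^{n-1},\phi^{n-1})\|_\Vo^2$, which you do not control here (the $\tilde v_j$ converge only in $\Yo$). The bound you actually need comes from the single-step energy inequality (\ref{2.49}) in the proof of Theorem~\ref{t:bdh}: for fixed $k$ it yields
\[
\frac{\nuo}{2\K}\|u^n\|^2 + 2|\mu^n|^2_{L^2} \le k^{-1}\bigl(E^{n-1} + C\bigr),
\]
with $E^{n-1}$ controlled by $\|\tilde v_j\|_\Yo$ via (\ref{2.53}), and then (\ref{2.58}) converts the $|\mu^n|_{L^2}$ bound into a bound on $|\Ag\phi^n|_{L^2}$. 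With this substitution your argument goes through.
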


%======================================================================================

\begin{Prop}\label{prop:Vabs}
Let $k \leq \kappa_1$, where $\kappa_1$ is given in Proposition
\ref{t:bdh}. Then there exists a constant $R_1>0$ such that for
every $R\geq 0$ and $\|(u_0,\phi_0)\|_{\Yo}\leq R$, there exists
$N_1=N_1(R, \dt)\geq 0$ such that
\begin{equation}\label{eq:BBBB1}
\|S_k^n (u_0,\phi_0)\|_{\Vo}\leq R_1, \qquad \forall n\geq N_1.
\end{equation}
Thus, the set
$$
\B_1=\{(u,\phi)\in \Vo:\, \|(u,\phi)\|_{\Vo}\leq R_1\}
$$
is a $\Vo$-bounded absorbing set for $\{S_k\}_{n\in\N}$, for
$k\in(0,\kappa_1]$.
\end{Prop}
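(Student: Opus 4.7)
The plan is to deduce Proposition~\ref{prop:Vabs} essentially as a corollary of Proposition~\ref{t:bdh1}, in particular of the uniform estimate (\ref{1.86}). The constants required by the statement line up perfectly with the two estimates already in hand: $R_1$ is allowed to depend on $\gvi$ but not on initial data, which matches the structure of $K_4(\gvi)$, while $N_1$ is allowed to depend on both $R$ and $k$, which matches the $\Yo$-absorbing time produced by Theorem~\ref{t:bdh}. The only genuine adaptation needed is to relax the hypothesis $(u_0,\phi_0)\in\Vo$ of Proposition~\ref{t:bdh1} to $(u_0,\phi_0)\in\Yo$; everything else is bookkeeping.

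My first step would be to fix $(u_0,\phi_0)\in\Yo$ with $\|(u_0,\phi_0)\|_{\Yo}\le R$, select any trajectory $(u^n,\phi^n)\in S_k^n(u_0,\phi_0)$, and use (\ref{q:bdh}) in Theorem~\ref{t:bdh} to produce an explicit integer $N_0=N_0(R,k)$ after which $\|(u^n,\phi^n)\|_{\Yo}^2\le 2\rho_0^2$. Solving $(1+\kappa k)^{-N_0}Q^2(R)\le \rho_0^2$ gives $N_0 k\le T_0(R)$ with $T_0(R)$ independent of $k\in(0,\kappa_1]$, which is precisely the quantity $T_0$ appearing in the statement of (\ref{1.86}).

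My second step would be to shift the index by one so as to be inside $\Vo$: since any solution of (\ref{4.111}) automatically satisfies $Au\in H$ and $\Ag\phi\in L^2$, we have $(u^1,\phi^1)\in\Vo$ for every choice of output, even though $(u_0,\phi_0)$ lies only in $\Yo$. I would then apply Proposition~\ref{t:bdh1} to the shifted trajectory $(\tilde u^m,\tilde\phi^m):=(u^{m+1},\phi^{m+1})$; the $\Yo$-absorbing time for this data is still controlled by $N_0(R,k)$ by the semigroup property together with the monotonicity of $Q$ and the $\Yo$-bound (\ref{q:bdv}). The uniform estimate (\ref{1.86}) then delivers
\[
\|(\tilde u^m,\tilde\phi^m)\|_{\Vo}\le K_4(\gvi) \qquad \text{for all } m\ge 2N_0(R,k)+1,
\]
which translates to $\|(u^n,\phi^n)\|_{\Vo}\le K_4(\gvi)$ for all $n\ge 2N_0(R,k)+2$. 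Setting $R_1:=K_4(\gvi)$ and $N_1:=2N_0(R,k)+2$ yields (\ref{eq:BBBB1}), and the ball $\B_1$ is then absorbing by definition.

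I do not anticipate any serious obstacle: the analytic content has already been established in Theorem~\ref{t:bdh} and Proposition~\ref{t:bdh1}, and the only delicate point is the hypothesis mismatch about the regularity of initial data, which is handled at zero cost by the one-step shift since $N_1$ is permitted to depend on $R$ and $k$. The step I would double-check most carefully is that the shifted trajectory truly inherits the same $\Yo$-absorbing time (up to a harmless constant), so that the constant $K_4(\gvi)$ can be used without any further dependence creeping in.
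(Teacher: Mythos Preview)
Your proposal is correct and is exactly what the paper intends: the proposition is stated there without proof, as an immediate consequence of the eventual $\Vo$-bound (\ref{1.86}) in Proposition~\ref{t:bdh1}, with $R_1=K_4(\gvi)$ and $N_1$ taken from the $\Yo$-absorbing time. Your one-step shift to go from $\Yo$ data to $\Vo$ data is the right way to handle the regularity mismatch and costs nothing, since $S_k$ maps into $\Vo$ by definition and the constant $K_4$ in (\ref{1.86}) depends only on the $\Yo$-absorbing time, not on $\|(u^1,\phi^1)\|_{\Vo}$.
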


%============================================================================================

\begin{Prop}
For every $k\in (0,\kappa_1]$, there exists the global attractor
$\A_k$ of the m-semigroup $\{S_k\}_{n\in \N}$.
\end{Prop}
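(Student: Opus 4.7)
The plan is to verify the three hypotheses of Theorem \ref{teo:attr} for the m-semigroup $\{S_k\}_{n\in\N}$ acting on the phase space $\Yo$. Closedness of $\{S_k\}_{n\in\N}$ has just been established in the preceding theorem, and Proposition \ref{prop:Vabs} furnishes the set $\B_1$, which is absorbing for $\{S_k\}_{n\in\N}$ and, crucially, bounded in the stronger space $\Vo$. What remains is the asymptotic-compactness condition (\ref{eq:asymptcpt}).

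The critical observation is that $\B_1$ is bounded not merely in $\Yo$ but in $\Vo$. Since $\M\subset\R^2$ is bounded with smooth boundary, the Rellich--Kondrachov theorem gives the compact embeddings $V\hookrightarrow H$ and $D(\Ag)\hookrightarrow H^1(\M)$, whence the product embedding $\Vo\hookrightarrow\Yo$ is compact. Consequently $\B_1$ is relatively compact in $\Yo$, so its Kuratowski measure of noncompactness satisfies $\alpha(\B_1)=0$. The absorbing property then yields $S_k^n\B_1\subset\B_1$ for all sufficiently large $n$, hence
\[
\lim_{n\to\infty}\alpha(S_k^n\B_1)=0.
\]
All hypotheses of Theorem \ref{teo:attr} being satisfied, one concludes that $\A_k=\omega(\B_1)$ is the global attractor of the m-semigroup $\{S_k\}_{n\in\N}$.

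There is no substantive obstacle here: the proposition is essentially a corollary of Theorem \ref{teo:attr} once Proposition \ref{prop:Vabs} is in hand. The genuine work occurred earlier, in upgrading the $\Yo$-absorbing ball of Corollary \ref{C1} to a \emph{$\Vo$-bounded} absorbing set by means of the $\Vo$-uniform estimate of Proposition \ref{t:bdh1}; that higher-regularity bound is precisely the mechanism that activates asymptotic compactness via the compact embedding $\Vo\hookrightarrow\Yo$. The only minor care required is to observe that the closedness already verified for $\{S_k\}_{n\in\N}$, together with the $\Vo$-boundedness of the orbit of $\B_1$, suffices to guarantee that $\omega(\B_1)$ is a nonempty compact invariant set in $\Yo$, as needed in the definition of global attractor used in Theorem \ref{teo:attr}.
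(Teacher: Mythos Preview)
Your proof is correct and is exactly the argument the paper has in mind: the proposition is stated in the paper without proof, as an immediate consequence of Theorem~\ref{teo:attr} once closedness and the $\Vo$-bounded absorbing set $\B_1$ from Proposition~\ref{prop:Vabs} are available; you have correctly filled in the one remaining step, namely that the compact embedding $\Vo\hookrightarrow\Yo$ forces $\alpha(\B_1)=0$ and hence $\alpha(S_k^n\B_1)=0$ for large $n$.
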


%===========================================================================================

\begin{Rem}
Since the global attractor $\A_k$ is the smallest closed attracting
set of $\Yo$, Proposition \ref{prop:Vabs} also implies
\begin{equation}\label{5.21}
\A_k\subset \B_1, \forall k\in(0,\kappa_1],
\end{equation}
and thus
\begin{equation}\label{eq:unifbdd}
\bigcup_{k\in(0,\kappa_1]}\A_k \subset \B_1.
\end{equation}
\end{Rem}

%%===========================================================================================

From relation (\ref{eq:unifbdd}) we can see that condition ($H1$) of
Theorem \ref{teo:approx} is satisfied.  In order to prove
that condition ($H2$) is satisfied  we define, for
any function $\psi$ and for any $\dt>0$, the following:
\begin{equation}
\psi_k(t)=\psi^n, \quad t \in [(n-1)k, nk),
\end{equation}
\begin{equation}\label{tilde}
\tilde{\psi}_k(t)= \psi^n+\frac{t-nk}{k}(\psi^n-\psi^{n-1}), \quad
 t\in [(n-1)k, nk).
\end{equation}

With the above notations, the system (\ref{q-ies}) can be rewritten
as follows, for $t \in [(n-1)k, nk)$:
\begin{equation}\label{4:102}
 \left \{
\begin{array}{ll}
\Frac{d \ut}{d t} + \nuo A \ut + B_0(\ut,\ut) - \K R_0( \epso \Ag
\pt, \pt) = g+g_k(t), \\ \\
\Frac{d \pt}{d t} + \epso \Ag \phi_k + \alpha \fg(\phi_k) + B_1(\ut, \pt) = B_1(\ut,
\pt)-B_1(u_k(t), \phi_k(t)),
\end{array} \right. \end{equation}
where
\begin{eqnarray}\label{4:103}
g_k(t)
&= &\nuo A(\ut-u_k(t)) + B_0(\ut-u_k(t),\ut)+ B_0(u_k(t),\ut-u_k(t)) \no\\
&-& \K \left(R_0( \epso \Ag (\pt-\phi_k(t)), \pt)+\K R_0( \epso \Ag
\phi_k(t), \pt-\phi_k(t))\right).
\end{eqnarray}
Subtracting (\ref{4:102}) from (\ref{y6}) and setting
\begin{equation}
{{\xi}}_k(t)=u(t)-\tilde{u}_k(t), \qquad
\eta_k(t)=\phi(t)-\tilde{\phi}_k(t),
\end{equation}
we obtain
\begin{equation}\label{4:105}
\left \{
\begin{array}{ll}
\Frac{d \xi_k(t)}{d t} + \nuo A \xi_k(t) + B_0(\xi_k(t),u(t))+
B_0(\ut,\xi_k(t)) - \K \left(R_0( \epso \Ag \eta_k(t), \phi(t)) +
R_0(
\epso \Ag \pt, \eta_k(t))\right) = -g_k(t),  \\ \\
\Frac{d \eta_k(t)}{d t} + \epso \Ag \eta_k(t) + B_1(\xi_k(t),
\phi(t))+B_1(\ut, \eta_k(t)) = -\alpha \left(\fg(\phi(t))-
\fg(\pt)\right)-h_k(t),
\end{array} \right. \end{equation}
where $g_k(t)$ is given in (\ref{4:103}) and
\begin{eqnarray}\label{4:106}
h_k(t)= &B_1(\ut-u_k(t), \pt)+B_1(u_k(t), \pt-\phi_k(t))\\
&+\epso \Ag (\pt-\phi_k(t))+\alpha \left(\fg(\pt)-
\fg(\phi_k(t))\right). \no
\end{eqnarray}

%===========================================================================================
\begin{Lem}\label{fk, gk}
Let $T^*>0$ be arbitrarily fixed and let $k< \kappa_1$, where
$\kappa_1$ is given in Proposition \ref{t:bdh}. Assume that $({u}_0,
\phi_0) \in \mathcal{A}_k$ and let $({u}^n, \phi^n)$ be the solution
of the numerical scheme (\ref{q-ies}). Then there exist $K_{6}(
T^*)$ and $K_{7}(T^*)$ such that
\begin{equation}\label{fk}
\|{g}_k\|_{L^2(0,T^*;{V}')}^2\leq \dt K_{6}(T^*),
\end{equation}
and
\begin{equation}\label{gk}
\|h_k\|_{L^2(0,T^*;D(A\g)')}^2\leq \dt K_{7}(T^*).
\end{equation}
\end{Lem}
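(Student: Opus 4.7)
The plan is to expand the definitions (\ref{4:103}) and (\ref{4:106}), estimate each constituent piece pointwise in the relevant dual norm, and integrate in $t$ exploiting the elementary identity
$$
\int_{(n-1)k}^{nk}\|\tilde\psi_k(t)-\psi_k(t)\|_X^2\,dt=\frac{k}{3}\|\psi^n-\psi^{n-1}\|_X^2,
$$
valid in any norm $\|\cdot\|_X$ and immediate from (\ref{tilde}) via the substitution $s=t-nk$. Because $(u_0,\phi_0)\in\A_k\subset\B_1$, Proposition \ref{t:bdh1} furnishes the $\Vo$-uniform bound $\|(u^n,\phi^n)\|_{\Vo}\le K_3(R_1,\gvi)$ for every $n\ge 0$ and, via (\ref{1.59}), the telescoped estimate
$$
\sum_{n=1}^{N}\bigl(\|u^n-u^{n-1}\|^2+|A_\gamma(\phi^n-\phi^{n-1})|_{L^2}^2\bigr)\le C(T^*),\qquad N=\lceil T^*/k\rceil.
$$
Every summand of $g_k$ and $h_k$ carries exactly one ``jump'' factor $\tilde\psi_k-\psi_k$, so squaring and integrating automatically produces the $k$ prefactor asserted in (\ref{fk})--(\ref{gk}).

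For (\ref{fk}) I will treat the five pieces of $g_k$ in turn. The linear part obeys $\|\nu_1 A(\tilde u_k-u_k)\|_{V'}\le\nu_1\|\tilde u_k-u_k\|$. For the two $B_0$-terms, dualizing (\ref{b0.1}) and using the Poincar\'e inequality (\ref{Poin}) yields $\|B_0(u,v)\|_{V'}\le c\|u\|\|v\|$, so that the $\Vo$-uniformly bounded factor $\|\tilde u_k\|$ or $\|u_k\|$ can be pulled out as a constant and only the jump $\|\tilde u_k-u_k\|$ remains. For the two $R_0$-terms, the duality $(R_0(\mu,\phi),w)=b_1(w,\phi,\mu)$ combined with (\ref{b3.1}) and Poincar\'e yields $\|R_0(\mu,\phi)\|_{V'}\le c\|\phi\|^{1/2}|A_\gamma\phi|_{L^2}^{1/2}|\mu|_{L^2}$; after $\|\phi\|$ and $|A_\gamma\phi|_{L^2}$ are absorbed by the $\Vo$-bound, only the jump $|A_\gamma(\tilde\phi_k-\phi_k)|_{L^2}$ survives. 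Squaring, summing and applying the two displayed ingredients above delivers (\ref{fk}).

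For (\ref{gk}) I will work in $D(A_\gamma)'$ through the chain $L^2(\Omega)\hookrightarrow D(A_\gamma)'$ whenever possible. The linear piece $\epso A_\gamma(\tilde\phi_k-\phi_k)$ is dispatched by the duality $\|A_\gamma\varphi\|_{D(A_\gamma)'}\le|\varphi|_{L^2}\le\gamma^{-1}|A_\gamma\varphi|_{L^2}$. The two $B_1$-pieces are bounded in $L^2(\Omega)$ by the standard two-dimensional trilinear estimate $|B_1(u,\phi)|_{L^2}\le c|u|_{L^2}^{1/2}\|u\|^{1/2}\|\phi\|^{1/2}|A_\gamma\phi|_{L^2}^{1/2}$, after which the $\Vo$-uniform bound absorbs the factor involving $\tilde\phi_k$ or $u_k$ and a single jump remains. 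These contributions are summed exactly as in the previous paragraph.

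The main obstacle is the nonlinear difference $\alpha(\fg(\tilde\phi_k)-\fg(\phi_k))$, because $\fg$ is only locally Lipschitz---condition (\ref{s1}) permits polynomial growth of $\fg'$. The remedy is to exploit the two-dimensional Sobolev embedding $D(A_\gamma)\subset H^2(\Omega)\hookrightarrow L^\infty(\Omega)$: the $\Vo$-uniform bound $\|(u^n,\phi^n)\|_{\Vo}\le K_3$ yields $|\tilde\phi_k|_{L^\infty}+|\phi_k|_{L^\infty}\le C(R_1)$ uniformly in $n$ and $t$, whereupon the mean value theorem combined with (\ref{s1}) produces the local Lipschitz bound $|\fg(\tilde\phi_k)-\fg(\phi_k)|_{L^2}\le C(R_1)|\tilde\phi_k-\phi_k|_{L^2}$. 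This reduces the nonlinear piece to the jump $|\tilde\phi_k-\phi_k|_{L^2}$, and the bookkeeping combining (\ref{2.108}), (\ref{1.59}) and the jump identity closes the argument.
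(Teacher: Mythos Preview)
Your proposal is correct and follows essentially the same approach as the paper: decompose $g_k$ and $h_k$ into their constituent pieces, bound each piece in the appropriate dual norm using the trilinear estimates (\ref{b0.1}), (\ref{b3.1}) together with the uniform $\Vo$-bound (\ref{2.108}) so that exactly one ``jump'' factor survives, and then integrate in time using (\ref{1.59}). The only minor technical deviation is your treatment of $\fg(\tilde\phi_k)-\fg(\phi_k)$ via the embedding $D(A_\gamma)\hookrightarrow L^\infty$ and the $\Vo$-bound, whereas the paper controls this term (see (\ref{4.141})) using only the $\Yo$-bound (\ref{q:bdv}) and the two-dimensional embedding $H^1\hookrightarrow L^q$; both routes are valid and lead to the same conclusion.
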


\begin{proof}
We begin by noting that for any $t \in [(n-1)k, nk)$ we have
\begin{equation}\label{4.12}
\tilde{\psi}_k(t)-{\psi}_k(t)=\frac{t-nk}{k}(\psi^n-\psi^{n-1}).
\end{equation}
Also, since $(u_0,\phi_0) \in \mathcal{A}_k$, we have that
$\|(u_0,\phi_0)\|_{\Vo}\leq R_1$ (by (\ref{5.21})) and then by
Proposition \ref{t:bdh} we obtain that
\begin{equation}\label{5.33}
\|(u^n,\phi^n)\|_{\Vo} \leq K_3(R_1), \, \forall n \geq 0.
\end{equation}

Now let $v\in V$ be such that $\|v\|\leq 1$, and let $t\in
[(n-1)k,nk)$ be fixed. Using  (\ref{b0.1}) and
(\ref{b3.1}), and recalling (\ref{2.108}), we have the following bounds
\begin{equation}\label{4.133}
\quad |b_0(\ut-u_k(t),\ut,v)+ b_0(u_k(t),\ut-u_k(t),v)|
\leq c K_3\|u^n-u^{n-1}\|,
\end{equation}
\begin{eqnarray}\label{4.134}
\K \epso |b_1(v, \pt, \Ag (\pt-\phi_k(t)))+b_1(v, \pt-\phi_k(t),
\Ag\phi_k(t))|\\
\leq c \K \epso \left( |A\g \pt|_{L^2} + |\Ag \phi_k(t)|_{L^2}
\right)|\Ag (\pt-\phi_k(t))|_{L^2}
\leq c K_3 |\Ag (\phi^n-\phi^{n-1})|_{L^2}.\no
\end{eqnarray}
We also have
\begin{equation}\label{4.135}
\nuo |(A(\ut-u_k(t)), v)_{L^2}| \leq \nuo \|\ut-u_k(t)\| \|v\| \leq
\nuo \|u^n-u^{n-1}\|.
\end{equation}
Relations (\ref{4.133})--(\ref{4.135}) imply
\begin{equation}\label{4.136}
\|{g}_k(t)\|_{{V}'}\leq c K_3(\|u^n-u^{n-1}\|+|\Ag
(\phi^n-\phi^{n-1})|_{L^2}),
\end{equation}
and thus, setting $N^*=\lfloor T^\star/k \rfloor$ and recalling (\ref{4.136}) \textrm{ and
} (\ref{1.59}), we obtain
\begin{eqnarray}\label{4.137}
\|{g}_k\|_{L^2(0,T^*;{V}')}^2=\sum_{n=1}^{N^*+1} \int_{(n-1)\dt}
^{n\dt}\|{g}_k(t)\|_{{V}'}^2 dt \leq \dt K_6( T^*),
\end{eqnarray}
which proves (\ref{fk}).

Now let $\phi\in D(A\g)$ be such that $|A\g\phi|_{L^2}\leq 1$, and
let $t\in [(n-1)k,nk)$ be fixed. Using (\ref{b5.1}), (\ref{b3.1})
and (\ref{q:bdv}), we obtain
\begin{eqnarray}\label{4.138}
|b_1(\ut-u_k(t), \pt, \phi)| \leq c K_1 \|u^n-u^{n-1}\|,
\end{eqnarray}
\begin{eqnarray}\label{4.139}
|b_1(u_k(t), \pt-\phi_k(t), \phi)| \leq c K_3 |\phi^n-\phi^{n-1}|_{L^2}.
\end{eqnarray}
We also have
\begin{equation}\label{4.140}
|(\Ag (\pt-\phi_k(t)), \phi)_{L^2}|\leq |\pt-\phi_k(t)|_{L^2}
|A\g\phi|_{L^2} \leq |\phi^n-\phi^{n-1}|_{L^2}.
\end{equation}
\begin{eqnarray}\label{4.141}
|(\fg(\pt)- \fg(\phi_k(t)), \phi)_{L^2}|  \leq |\fg(\pt)-
\fg(\phi_k(t))|_{L^2} |\phi|_{L^2}\\
 \leq K_8 \|\pt-\phi_k(t)\|  \quad (\textrm{by } (\ref{fg}),  (\ref{s1}), (\ref{q:bdv}))
 \leq K_8 \|\phi^n-\phi^{n-1}\|. \no
\end{eqnarray}
Gathering relations (\ref{4.138})--(\ref{4.141}) we obtain
\begin{equation}\label{4.142}
\|h_k(t)\|_{D(A\g)'}\leq c(K_1+K_3+K_8)(\|u^n-u^{n-1}\|+|\Ag
(\phi^n-\phi^{n-1})|_{L^2}),
\end{equation}
and recalling (\ref{4.142}) \textrm{ and } (\ref{1.59})
we obtain (\ref{gk}). This completes the proof of the lemma.
\end{proof}

%==================================================================================================================

We are now in a position to prove that condition (H2) of Theorem
\ref{teo:approx} is satisfied.

\begin{Prop}[Finite time uniform convergence]
For any $T^*>0$ we have
\begin{equation}
\lim_{k \to 0} \sup_{(u_0,\phi_0) \in \mathcal{A}_k, \,nk \in
[0,T^*]} \|S_k^n (u_0,\phi_0)-S(nk)(u_0,\phi_0)\|_{\Yo}=0.
\end{equation}
\end{Prop}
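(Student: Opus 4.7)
The plan is a continuous Gronwall argument applied to the error system (\ref{4:105}) for $\xi_k=u-\ut$ and $\eta_k=\phi-\pt$, measured directly in the $\Yo$-norm. Since $(u_0,\phi_0)\in\A_k\subset\B_1$ by (\ref{5.21}), we have $\|(u_0,\phi_0)\|_{\Vo}\le R_1$ uniformly in $k\in(0,\kappa_1]$ and in the choice of initial datum. Proposition \ref{t:bdh1} then gives $\|(u^n,\phi^n)\|_{\Vo}\le K_3(R_1)$ for every $n\ge 0$, while the well-posedness theory of \cite{gra1} provides a $\Vo$-bound $\|(u(t),\phi(t))\|_{\Vo}\le C(R_1,T^*)$ on $[0,T^*]$ for the continuous solution. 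All constants appearing below will therefore depend only on $R_1$ and $T^*$, so the supremum over $(u_0,\phi_0)\in\A_k$ in the statement reduces to a single pointwise estimate.

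Next, I would take the inner product of the first equation in (\ref{4:105}) with $\K^{-1}\xi_k$ in $H$ and of the second with $\epso A\g\eta_k$ in $L^2$, and add the resulting identities. Since $(\eta_k,A\g\eta_k)_{L^2}=\|\eta_k\|\g^2$, the time-derivative side assembles into $\frac{1}{2}\frac{d}{dt}\|(\xi_k,\eta_k)\|_{\Yo}^2$, and the diffusion produces the dissipation $\frac{\nuo}{\K}\|\xi_k\|^2+\epso^2|A\g\eta_k|_{L^2}^2$. The key algebraic observation is that the two cross-coupling contributions $\epso b_1(\xi_k,\phi,A\g\eta_k)$, one arising from $R_0(\epso A\g\eta_k,\phi)$ in the velocity equation and the other from $B_1(\xi_k,\phi)$ in the phase equation, have opposite signs and cancel exactly, in the same spirit as in the derivation of (\ref{2.27}). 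The surviving trilinear terms $\K^{-1}b_0(\xi_k,u,\xi_k)$, $\epso b_1(\xi_k,\eta_k,A\g\pt)$ and $\epso b_1(\ut,\eta_k,A\g\eta_k)$ I would control using (\ref{b0.1}), (\ref{b3.1}), Ladyzhenskaya's inequality, and Young's inequality, together with the uniform bounds $K_3(R_1)$ on $\|\ut\|$, $\|\pt\|$ and $|A\g\pt|_{L^2}$; each becomes a small fraction of the available dissipation plus a constant multiple of $\|(\xi_k,\eta_k)\|_{\Yo}^2$. The potential term $\epso\alpha(\fg(\phi)-\fg(\pt),A\g\eta_k)_{L^2}$ is handled via the mean value theorem, the growth condition (\ref{s1}), and the two-dimensional embedding $H^2(\M)\hookrightarrow L^\infty(\M)$, which together yield $|\fg(\phi)-\fg(\pt)|_{L^2}\le c(R_1)|\eta_k|_{L^2}$, after which Young's inequality absorbs the contribution against $\epso^2|A\g\eta_k|_{L^2}^2$. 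Finally, the duality pairings $\K^{-1}\langle g_k,\xi_k\rangle$ and $\epso\langle h_k,A\g\eta_k\rangle$ are split by Young to leave $c\|g_k\|_{V'}^2+c\|h_k\|_{D(A\g)'}^2$ on the right-hand side.

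Gathering these estimates, I arrive at a differential inequality of the form
\[
\frac{d}{dt}\|(\xi_k,\eta_k)\|_{\Yo}^2 \le C_1\|(\xi_k,\eta_k)\|_{\Yo}^2 + C_2\left(\|g_k\|_{V'}^2+\|h_k\|_{D(A\g)'}^2\right),
\]
with constants $C_1,C_2$ depending only on $R_1$ and $T^*$. From (\ref{tilde}) evaluated at $t=0$ on the interval $[0,k)$, $\tilde u_k(0)=u_0$ and $\tilde\phi_k(0)=\phi_0$, so the initial error vanishes. The standard Gronwall inequality combined with Lemma \ref{fk, gk} then yields
\[
\|(\xi_k(t),\eta_k(t))\|_{\Yo}^2 \le k\,C_2\,e^{C_1 T^*}\left(K_6(T^*)+K_7(T^*)\right),\qquad\forall\,t\in[0,T^*],
\]
and evaluating at $t=nk$, where $\tilde u_k(nk)=u^n=S_k^n u_0$ and $\tilde\phi_k(nk)=\phi^n$, gives the claim with an $O(\sqrt{k})$ rate, uniformly in $(u_0,\phi_0)\in\A_k$ and $nk\in[0,T^*]$. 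The main obstacle I anticipate is the trilinear term $\epso b_1(\xi_k,\eta_k,A\g\pt)$: because $A\g\pt$ only sits in $L^2$, a direct appeal to (\ref{b3.1}) is unavailable, and a careful four-factor splitting such as $|\xi_k|_{L^2}^{1/2}\|\xi_k\|^{1/2}\|\eta_k\|\g^{1/2}|A\g\eta_k|_{L^2}^{1/2}|A\g\pt|_{L^2}$ followed by Young's inequality is required so that the $|A\g\pt|_{L^2}$ factor is pushed into the Gronwall coefficient (where it is controlled by $K_3(R_1)$) rather than consuming the dissipation needed to close the estimate.
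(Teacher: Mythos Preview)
Your proposal is correct and follows essentially the same route as the paper: test the error system (\ref{4:105}) against $(\xi_k,\epso A\g\eta_k)$, estimate the trilinear and potential terms using (\ref{b0.1}), (\ref{b3.1}) and the uniform $\Vo$-bounds coming from (\ref{5.21}) and Proposition~\ref{t:bdh1}, then apply Gronwall together with Lemma~\ref{fk, gk}.

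Two remarks. First, your ``key algebraic observation'' that the two contributions $\epso b_1(\xi_k,\phi,A\g\eta_k)$ cancel after scaling the velocity equation by $\K^{-1}$ is a genuine simplification the paper does not exploit: there the same two terms are estimated separately in (\ref{4.147}) and (\ref{4.152}), which is redundant. Second, your claim that ``a direct appeal to (\ref{b3.1}) is unavailable'' for $\epso b_1(\xi_k,\eta_k,A\g\pt)$ is mistaken---in fact the four-factor bound you write down \emph{is} precisely (\ref{b3.1}) applied with $(u,\phi,\psi)=(\xi_k,\eta_k,A\g\pt)$, which only requires $A\g\pt\in L^2$; this is exactly how the paper handles it in (\ref{4.148}). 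So the ``obstacle'' you flag is not one.
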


\begin{proof}
Multiplying the first equation of (\ref{4:105}) by $\xi(t)$ and
integrating we obtain
\begin{eqnarray}\label{4.145}
\frac{1}{2} \frac{d}{dt} |\xi_k(t)|^2_{L^2}+\nu_1
\|\xi_k(t)\|^2+b_0(\xi_k(t), u(t), \xi_k(t))\\
-\K \left(b_1( \xi_k(t),  \phi(t), \epso \Ag \eta_k(t))
+b_1(\xi_k(t), \eta_k(t), \epso \Ag \pt)\right) = -(g_k(t),\no
\xi_k(t))_{L^2}.
\end{eqnarray}
Using (\ref{b0.2}) and (\ref{b3.1}) we bound the nonlinear terms as
follows:
\begin{eqnarray}\label{4.146}
|b_0(\xi_k(t), u(t), \xi_k(t))| \leq c_b |\xi_k(t)|_{L^2}
\|\xi_k(t)\| \|u(t)\| \\
\leq \frac{\nuo}{8} \|\xi_k(t)\|^2 + c |\xi_k(t)|_{L^2}^2
\|u(t)\|^2, \no
\end{eqnarray}
\begin{eqnarray}\label{4.147}
\K |b_1( \xi_k(t),  \phi(t), \epso \Ag \eta_k(t))| \leq c_b \K
\epso |\xi_k(t)|^{1/2}_{L^2} \|\xi_k(t)\|^{1/2} \|\phi(t)\|^{1/2}
|A\g \phi(t)|^{1/2}_{L^2} |\Ag \eta_k(t)|_{L^2}\no\\
 \leq \frac{\nu_2^2}{4}\K |\Ag \eta_k(t)|_{L^2}^2+ \frac{\nu_1}{8}
\|\xi_k(t)\|^{2}  + c |\xi_k(t)|^{2}_{L^2} \|\phi(t)\|^{2} |A\g
\phi(t)|^{2}_{L^2},
\end{eqnarray}
\begin{eqnarray}\label{4.148}
\K|b_1(\xi_k(t), \eta_k(t), \epso \Ag \pt) \leq c_b \K \epso
|\xi_k(t)|^{1/2}_{L^2} \|\xi_k(t)\|^{1/2} \|\eta_k(t)\|^{1/2} |A\g
\eta_k(t)|^{1/2}_{L^2} |\Ag \pt|_{L^2}\no\\
 \leq \frac{\nu_2^2}{4} \K |\Ag \eta_k(t)|_{L^2}^2+ \frac{\nu_1}{8}
\|\xi_k(t)\|^{2}  + c |\xi_k(t)|_{L^2} \|\eta_k(t)\| |\Ag
\pt|^{2}_{L^2}.
\end{eqnarray}
Using the Cauchy-Schwarz inequality, we bound the right-hand side of
(\ref{4.145}) as
\begin{equation}\label{4.149}
%\begin{eqnarray}
|(g_k(t), \xi_k(t))_{L^2}| \leq \|g_k(t)\|_{{V}'} \|\xi_k(t)\| \leq
\frac{\nu_1}{8} \|\xi_k(t)\|^{2} + c \|g_k(t)\|_{{V}'}^2.
%\end{eqnarray}
\end{equation}
Relations (\ref{4.145})--(\ref{4.149}) imply
\begin{eqnarray}\label{4.150}
 \frac{d}{dt} |\xi_k(t)|^2_{L^2}+\nu_1
\|\xi_k(t)\|^2\leq c |\xi_k(t)|_{L^2}^2 \|u(t)\|^2 +
\frac{\nu_2^2}{2}\K |\Ag \eta_k(t)|_{L^2}^2+ c |\xi_k(t)|^{2}_{L^2}
\|\phi(t)\|^{2} |A\g \phi(t)|^{2}_{L^2} \no\\
 + c \left(|\xi_k(t)|_{L^2}^2+ \|\eta_k(t)\|^2\right) |\Ag
\pt|^{2}_{L^2} + c \|g_k(t)\|_{{V}'}^2.
\end{eqnarray}

Now multiplying the second equation of (\ref{4:105}) by $\epso
A\g\eta_k(t)$ and integrating we obtain
\begin{eqnarray}\label{4.151}
 \frac{\epso}{2} \frac{d}{dt} \|\eta_k(t)\|^2_{\gamma} + \nu_2^2 |\Ag \eta_k(t)|_{L^2}^2 +  b_1(\xi_k(t),
\phi(t), \epso A\g\eta_k(t))+ b_1(\ut, \eta_k(t), \epso A\g\eta_k(t)) \no\\
= -\alpha (\fg(\phi(t))- \fg(\pt), \epso A\g\eta_k(t)
)_{L^2}-(h_k(t), \epso A\g\eta_k(t))_{L^2}.
\end{eqnarray}
Using (\ref{b3.1}) we bound the nonlinear terms as follows:
\begin{eqnarray}\label{4.152}
|b_1(\xi_k(t), \phi(t), \epso A\g\eta_k(t))| \leq c_b  \epso
|\xi_k(t)|^{1/2}_{L^2} \|\xi_k(t)\|^{1/2} \|\phi(t)\|^{1/2} |A\g
\phi(t)|^{1/2}_{L^2} |\Ag \eta_k(t)|_{L^2}\no\\
 \leq \frac{\nu_2^2}{8} |\Ag \eta_k(t)|_{L^2}^2+ \frac{\nu_1}{4\K}
\|\xi_k(t)\|^{2}  + c |\xi_k(t)|^{2}_{L^2} \|\phi(t)\|^{2} |A\g
\phi(t)|^{2}_{L^2},
\end{eqnarray}
\begin{eqnarray}\label{4.153}
|b_1(\ut, \eta_k(t), \epso A\g\eta_k(t))| \leq c_b  \epso
|\ut|^{1/2}_{L^2} \|\ut\|^{1/2} \|\eta_k(t)\|^{1/2} |A\g
\eta_k(t)|^{3/2}_{L^2} \no\\
\leq \frac{\nu_2^2}{8} |\Ag \eta_k(t)|_{L^2}^2+ c|\ut|^{2}_{L^2}
\|\ut\|^{2} \|\eta_k(t)\|^{2}.
\end{eqnarray}
Using the Cauchy-Schwarz inequality, we bound the right-hand side of
(\ref{4.151}) as
\begin{eqnarray}\label{4.154}
\qquad \qquad \alpha |(\fg(\phi(t))- \fg(\pt), \epso A\g\eta_k(t))_{L^2}|\leq
\epso |\fg(\phi(t))- \fg(\pt)|_{L^2} |A\g\eta_k(t)|_{L^2}\\
\leq \frac{\nu_2^2}{8} |\Ag \eta_k(t)|_{L^2}^2+c |\fg(\phi(t))-
\fg(\pt)|_{L^2}^2 \leq \frac{\nu_2^2}{8} |\Ag \eta_k(t)|_{L^2}^2+c K_8^2
\|\eta_k(t)\|^2 \no
\end{eqnarray}
\begin{eqnarray}\label{4.155}
|(h_k(t), \epso A\g\eta_k(t))_{L^2}| &\leq& \epso \|h_k(t)\|_{D(A\g)'}
|A\g\eta_k(t))|_{L^2}\no\\
&\leq& \frac{\nu_2^2}{8} |\Ag \eta_k(t)|_{L^2}^2+ c \|h_k(t)\|_{D(A\g)'}^2.
\end{eqnarray}

Relations (\ref{4.151})--(\ref{4.155}) yield
\begin{eqnarray}\label{4.156}
{\epso} \frac{d}{dt} \|\eta_k(t)\|^2_{\gamma} + \nu_2^2 |\Ag
\eta_k(t)|_{L^2}^2 \leq \frac{\nu_1}{2\K} \|\xi_k(t)\|^{2}  + c
|\xi_k(t)|^{2}_{L^2} \|\phi(t)\|^{2} |A\g \phi(t)|^{2}_{L^2}\no\\
+c|\ut|^{2}_{L^2} \|\ut\|^{2} \|\eta_k(t)\|^{2}+c K_7^2
\|\eta_k(t)\|^2+ c \|h_k(t)\|_{D(A\g)'}^2.
\end{eqnarray}

Dividing (\ref{4.150}) by $\K$ and adding the resulting equation to
(\ref{4.156}) we obtain
\begin{eqnarray}\label{4.157}
 \frac{d}{dt} \|(\xi_k(t), \eta_k(t)\|^2_{\Yo}+\frac{\nu_1}{2\K}
\|\xi_k(t)\|^2 + \frac{\nu_2^2}{2} |\Ag \eta_k(t)|_{L^2}^2 \leq c
|\xi_k(t)|_{L^2}^2 \|u(t)\|^2 + c |\xi_k(t)|^{2}_{L^2}
\|\phi(t)\|^{2} |A\g \phi(t)|^{2}_{L^2} \no\\
 +  c \left(|\xi_k(t)|_{L^2}^2+ \|\eta_k(t)\|^2\right) |\Ag
\pt|^{2}_{L^2}
+ c \|g_k(t)\|_{{V}'}^2+c|\ut|^{2}_{L^2} \|\ut\|^{2}
\|\eta_k(t)\|^{2}
+c K_8^2 \|\eta_k(t)\|^2+ c \|h_k(t)\|_{D(A\g)'}^2.\no
\end{eqnarray}
Neglecting some positive terms, the above relation implies
\begin{eqnarray}\label{4.158}
 \frac{d}{dt} \|(\xi_k(t), \eta_k(t)\|^2_{\Yo} \leq  \G(t) \|(\xi_k(t),
 \eta_k(t)\|^2_{\Yo}+c \|g_k(t)\|_{{V}'}^2+ c \|h_k(t)\|_{D(A\g)'}^2,
\end{eqnarray}
where
\begin{eqnarray}\label{4.159}
 \G(t)=c\left( \|u(t)\|^2+\|\phi(t)\|^{2} |A\g \phi(t)|^{2}_{L^2} +|\Ag
\pt|^{2}_{L^2}+|\ut|^{2}_{L^2} \|\ut\|^{2}+K_8^2\right). \no
\end{eqnarray}
By the Gronwall Lemma and using the fact that $\xi_k(0)=
\eta_k(0)=0$, we obtain
\begin{equation}\label{4.160}
\quad \|(\xi_k(t), \eta_k(t)\|^2_{\Yo} \leq  c\int_0^t
\exp{\left(\int_\tau^t \G(s) \, ds\right)}
\left(\|g_k(\tau)\|_{{V}'}^2+ \|h_k(\tau)\|_{D(A\g)'}^2\right) \, d
\tau.
\end{equation}
Using the fact that the solution $({u}, \phi)$ of the continuous problem is
uniformly bounded in $\Vo$ for all $t\ge0$ (cf.\ \cite{gra1}),
and recalling (\ref{tilde}) and  (\ref{2.108}), we obtain
\begin{equation}\label{4.164}
\int_\tau^t \G(s) \, ds \leq c_6,
\end{equation}
for some constant  $c_6=c_6(T^*)>0$.

Relations (\ref{4.160}), (\ref{4.164}), (\ref{fk}) and (\ref{gk})
give
\begin{equation}\label{4.165}
\|(\xi_k(t), \eta_k(t)\|^2_{\Yo} \leq  \dt c_7,
\end{equation}
and thus
\begin{eqnarray}\label{4.166}
&&\lim_{k \to 0} \sup_{(u_0,\phi_0)\in \mathcal{A}_k, \,nk \in
[0,T^*]} \|S_k^n (u_0,\phi_0)-S(nk)(u_0,\phi_0)\|_{\Yo}\\
&=&\lim_{k \to 0} \sup_{(u_0,\phi_0) \in \mathcal{A}_k, \,nk \in
[0,T^*]} \sup_{(u^n, \phi^n)\in S_k^n (u_0,\phi_0)} \| (u^n,\phi^n)-(u(nk),\phi(nk))\|_{\Yo}\no\\
&=&\lim_{k \to 0} \sup_{(u_0,\phi_0) \in \mathcal{A}_k, \,nk \in
[0,T^*]}\sup_{(u^n, \phi^n)\in S_k^n (u_0,\phi_0)} \|(\tilde{u}_k(nk), \tilde{\phi}_k(nk))-(u(nk),\phi(nk))\|_{\Yo}\no\\
&=&\lim_{k \to 0} \sup_{(u_0,\phi_0) \in \mathcal{A}_k, \,nk \in
[0,T^*]} \sup_{(u^n, \phi^n)\in S_k^n (u_0,\phi_0)}\|(\xi_k(nk),
\eta_k(nk))\|_{\Yo}=0,\no
\end{eqnarray}
which concludes the proof of the lemma.
\end{proof}

%===========================================================================================

Having proved that conditions (H1) and (H2) of Theorem
\ref{teo:approx} are satisfied we also obtain that the discrete
attractors converge to the continuous attractor as the time-step
approaches zero. More precisely, we have the following:

\begin{Thm}\label{teo:apprrrr}
The family of attractors $\{\A_k\}_{k\in(0,\kappa_1]}$  converges,
as $k\to 0$, to  $\A$, in the following sense:
$$
\lim_{k\to 0}\dist(\A_k,\A)=0,
$$
where $\dist$ denotes the Hausdorff semidistance in $\Yo$, namely
$$
\dist(\A_k,\A)=\sup_{x_k\in\A_k}\inf_{x\in\A}\|x_k-x\|_{\Yo}.
$$
\end{Thm}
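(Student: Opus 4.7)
The plan is to derive this theorem as an immediate application of the abstract Theorem \ref{teo:approx}, since nearly all of the technical work has been assembled in the preceding lemmas and propositions of Section \ref{ssec:abs} and Subsection \ref{ss:5.2}. First, I would identify the objects to feed into Theorem \ref{teo:approx}: the continuous semigroup $S(t)$ generated by (\ref{y6}), which (by the well-posedness result from \cite{gra1}) is single-valued, strongly continuous on $\Yo$, and possesses the global attractor $\A$, so that in particular $S(t)$ is a closed m-semigroup; and the family $\{S_k\}_{n\in\N}$ of discrete closed m-semigroups with attractors $\A_k$, whose existence for $k\in(0,\kappa_1]$ has already been established in the previous propositions.

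Next I would verify the two hypotheses (H1) and (H2). Hypothesis (H1) follows directly from relation (\ref{eq:unifbdd}) and the continuous embedding $\Vo \hookrightarrow \Yo$: the set $\K=\bigcup_{k\in (0,\kappa_1]}\A_k$ is contained in the $\Vo$-bounded absorbing ball $\B_1$, and hence is bounded in $\Yo$. Hypothesis (H2), taken with $t_0=0$, is exactly the content of the Finite Time Uniform Convergence Proposition just proved, where the supremum is taken over initial data in $\A_k$ and over discrete times $nk\in[0,T^*]$.

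With both (H1) and (H2) in hand, Theorem \ref{teo:approx} applies and yields
\[
\lim_{k\to 0}\dist(\A_k,\A)=0,
\]
with $\dist$ being the Hausdorff semidistance in $\Yo$ defined in (\ref{5.1a}), which matches the distance in the statement. Since every element of $\A_k$ belongs to $\Yo$ and the family is uniformly bounded there, no additional compatibility argument between topologies is required.

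The main obstacle is essentially non-existent at this stage: all substantive work (uniform $\Yo$- and $\Vo$-bounds via the discrete Gronwall and discrete uniform Gronwall lemmas, construction of the absorbing ball $\B_1$, existence of $\A_k$ via multi-valued attractor theory, and the careful estimates on $g_k,h_k$ leading to the finite-time convergence) has already been carried out. The only care needed is notational, namely to confirm that the $\|\cdot\|_{\Yo}$-bound obtained in (\ref{4.165})--(\ref{4.166}) is uniform over $(u_0,\phi_0)\in \A_k$ and over all selections $(u^n,\phi^n)\in S_k^n(u_0,\phi_0)$, which is precisely what (\ref{4.166}) establishes.
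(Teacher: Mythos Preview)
Your proposal is correct and mirrors the paper's own argument exactly: the paper does not give a separate proof of this theorem but simply observes that, having verified (H1) via (\ref{eq:unifbdd}) and (H2) via the Finite Time Uniform Convergence Proposition, Theorem \ref{teo:approx} applies directly to yield $\lim_{k\to 0}\dist(\A_k,\A)=0$.
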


%===========================================================================================

\bibliography{biblio}

\begin{thebibliography}{10}

\bibitem{Ball}{\sc J.M. Ball},
{\em Continuity properties and global attractors of generalized semiflows
  and the {N}avier--{S}tokes equations}, J.\ Nonlinear Sci., 7 (1997), pp.~475--502.

\bibitem{Barbu}{\sc V.~Barbu and S.S. Sritharan},
{\em$ {H}^{\infty} $ control theory in fluid dynamic},  Proc. R. Soc. London A, 454 (1998), pp.~3009--3033.

\bibitem{bles} {\sc T.~Blesgen},
{\em A generalization of the {N}avier-{S}tokes equation to two-phase flow}, Pysica D (Applied Physics), 32 (1999), pp.~1119--1123.

\bibitem{cagi}{\sc G.~Caginalp},
{\em An analysis of a phase field model of a free boundary}, Arch. Rational Mech. Anal., 92(3) (1986), pp.~205--245.

\bibitem{CLM}{\sc T.~Caraballo, J.A. Langa, V.S. Melnik, and J.~Valero},
{\em Pullback attractors of nonautonomous and stochastic multivalued
  dynamical systems}, Set-Valued Anal., 2 (2003), pp.~153--201.

\bibitem{CZT}{\sc M.~Coti-Zelati and F.~Tone},
{\em Multivalued attractors and their approximation: Applications to the
  {N}avier--{S}tokes equations},  Numerische Mathematik, 122 (2012), pp 421--441.

\bibitem{ET}{\sc B.~Ewald and F.~Tone}
{\em Approximation of the long-term dynamics of the dynamical system
  generated by the two-dimensional thermohydraulics equations},International Journal of Numerical Analysis and Modeling,
  10(3) (2013), pp.~509--535.

\bibitem{fei1}{\sc E.~Feireisl, H.~Petzeltov\'a, E.~Rocca, and G.~Schimperna},
{\em Analysis of a phase-field model for two-phase compressible fluids}, Math. Models Methods Appl. Sci., 20(7) (2010), pp.~1129–1160.

\bibitem{gra2}{\sc C.~G. Gal and M.~Grasselli},
{\em Asymptotic behavior of a {C}ahn-{H}illiard-{N}avier-{S}tokes system
  in 2{D}}, Ann. Inst. H. Poincar\'{e} Anal. Non Lin\'{e}aire, 27(1) (2010), pp.~401–436.

\bibitem{gra1}{\sc C.~G. Gal and M.~Grasselli},
{\em Longtime behavior for a model of homogeneous incompressible two-phase
  flows}, Discrete Contin. Dyn. Syst., 28(1) (2010), pp.~1--39.

\bibitem{gra3}{\sc C.~G. Gal and M.~Grasselli},
{\em Trajectory attractors for binary fluid mixtures in 3{D}}, Chin. Ann. Math. Ser. B, 31(5) (2010), pp.~655–678.

\bibitem{MV}{\sc V.S. Melnik and J.~Valero},
{\em On attractors of multivalued semi-flows and differential inclusion}, Set-Valued Anal., 6 (1998), pp.~83--111.

\bibitem{RSS}{\sc R.~Rossi, S.~Segatti, and U.~Stefanelli},
{\em Attractors for gradient flows of nonconvex functionals and
  applications}, Arch.\ Ration.\ Mech.\ Anal., 187 (2008), pp.~91--135.

\bibitem{SSS}{\sc G.~Schimperna, S.~Segatti, and U.~Stefanelli}
{\em Well-posedness and long-time behavior for a class of doubly nonlinear
  equations},  Discrete Contin.\ Dyn.\ Syst., 18 (2007), pp.~15--38.

\bibitem{shen:90}{\sc J. Shen}
{\em Long time stabilities and convergences for the fully discrete
  nonlinear {G}alerkin methods},  Appl. Anal., 38(4) (1990), pp.~201--229.

\bibitem{temam88}{\sc R.~Temam}
 {\em Infinite dimensional dynamical systems in mechanics and
  physics}, volume~68, Appl. Math. Sci., Springer-Verlag, New York, second edition, 1988.

\bibitem{T4}{\sc F. Tone},
{\em On the long-time ${H}^2$-stability of the implicit {E}uler scheme for
  the 2{D} magnetohydrodynamics equations}, Journal of Scientific Computing, 38 (2009), pp.~331--348.

\bibitem{TW2011}{\sc F. Tone and X. Wang},
{\em Approximation of the stationary statistical properties of the
  dynamical system generated by the two-dimensional {R}ayleigh-{B}enard
  convection problem}, Analysis and Applications, 09(4) (2011), pp.~421--446.

\bibitem{TW}{\sc F. Tone and D. Wirosoetisno},
{\em On the long-time stability of the implicit {E}uler scheme for the
  2{D} {N}avier--{S}tokes equations}, SIAM Journal on Numerical Analysis, 44(1) (2006), pp.~29--40.

\bibitem{XW}{\sc X. Wang},
{\em Approximation of stationary statistical properties of dissipative
  dynamical systems: time discretization}, Math. Comp., 79(269) (2010), pp.~259--280.

\end{thebibliography}

\end{document}